\DeclareFontFamily{U}{cmr}{}
\DeclareFontShape{U}{cmr}{m}{n}{
<-6> cmr5
<6-7> cmr6
<7-8> cmr7
<8-9> cmr8
<9-10> cmr9
<10-12> cmr10
<12-> cmr12}{}
\DeclareSymbolFont{Xcmr} {U} {cmr}{m}{n}
\DeclareMathSymbol{\Delta}{\mathord}{Xcmr}{'001}
\DeclareMathSymbol{\Upsilon}{\mathord}{Xcmr}{'007}
\DeclareMathSymbol{\Omega}{\mathord}{Xcmr}{'012}
\setlist[itemize]{topsep=0ex,itemsep=0ex,parsep=0.4ex}
\setlist[enumerate]{topsep=0ex,itemsep=0ex,parsep=0.4ex}
\declaretheorem[name = Theorem, numberwithin = section, style = plain]{theorem}
\declaretheorem[name = Corollary, numberlike = theorem, style = plain]{corollary}
\declaretheorem[name = Definition, numberlike = theorem, style = definition]{definition}
\declaretheorem[name = Lemma, numberlike = theorem, style = plain]{lemma}
\declaretheorem[name = Problem, numberlike = theorem, style = plain]{problem}
\declaretheorem[name = Proposition, numberlike = theorem, style = plain]{proposition}
\crefname{theorem}{Theorem}{Theorems}
\crefname{lemma}{Lemma}{Lemmas}
\crefname{conjecture}{Conjecture}{Conjectures}
\crefname{claim}{Claim}{Claims}
\crefname{corollary}{Corollary}{Corollaries}
\crefname{fact}{Fact}{Facts}
\crefname{observation}{Observation}{Observations}
\crefname{proposition}{Proposition}{Propositions}
\crefname{remark}{Remark}{Remarks}
\crefname{section}{\S}{\S\S} 
\Crefname{section}{Section}{Sections} 
\crefname{subsection}{\S}{\S\S} 
\Crefname{subsection}{Subsection}{Subsections} 
\DeclareFontFamily{U}{matha}{\hyphenchar\font45}
\DeclareFontShape{U}{matha}{m}{n}{
<5> <6> <7> <8> <9> <10> gen * matha
<10.95> matha10 <12> <14.4> <17.28> <20.74> <24.88> matha12
}{}
\DeclareSymbolFont{matha}{U}{matha}{m}{n}
\DeclareMathSymbol{\specialuparrow}{\mathrel}{matha}{"D2}
\DeclareMathSymbol{\specialrightarrow}{\mathrel}{matha}{"D1}
\renewcommand*{\backref}[1]{}
\renewcommand*{\backrefalt}[4]{
\ifcase #1 Not cited.%
\or $\specialuparrow$#2%
\else $\specialuparrow$#2%
\fi%
}
\renewcommand{\epsilon}{\varepsilon}
\renewcommand{\ge}{\geqslant}
\renewcommand{\le}{\leqslant}
\renewcommand{\geq}{\geqslant}
\renewcommand{\leq}{\leqslant}
\DeclarePairedDelimiter{\abs}{\lvert}{\rvert}
\DeclarePairedDelimiter{\ceil}{\lceil}{\rceil}
\DeclarePairedDelimiter{\floor}{\lfloor}{\rfloor}
\DeclarePairedDelimiter{\set}{\{}{\}}
\DeclareMathOperator{\lhs}{Left}
\DeclareMathOperator{\rhs}{Right}
\newcommand{\defn}[1]{\textcolor{Maroon}{\emph{#1}}}
\newcommand*{\eps}{\varepsilon}
\newcommand*{\cord}{\chi_{<}}
\newcommand*{\lord}{\ell_{<}}
\newcommand*{\pord}{\pi_{<}}
\newcommand*{\rord}{\rho_{<}}
\newcommand*{\inv}{^{-1}}
\newcommand*{\pairs}{\tau}
\newcommand*{\edged}{\alpha}
\newcommand*{\bin}{\beta}
\newcommand*{\nlays}{L}
\newcommand*{\bE}{\mathbb{E}}
\newcommand*{\bN}{\mathbb{N}}
\newcommand*{\N}{\mathbb{N}}
\newcommand*{\bP}{\mathbb{P}}
\newcommand*{\cB}{\mathcal{B}}
\newcommand*{\cE}{\mathcal{E}}
\newcommand*{\cL}{\mathcal{L}}
\newcommand*{\cO}{\mathcal{O}}
\newcommand*{\cP}{\mathcal{P}}
\newcommand*{\cQ}{\mathcal{Q}}
\newcommand*{\cR}{\mathcal{R}}
\newcommand*{\fL}{\mathfrak{L}}
\title{Relative Tur\'{a}n densities for ordered graphs: \\all and nothing}
\date{\today}
\begin{document}

\author{Freddie Illingworth\footnotemark[2] \qquad Arjun Ranganathan\footnotemark[2] \\ Leo Versteegen\footnotemark[1] \qquad Ella Williams\footnotemark[2]}

\maketitle

\begin{abstract}
	Reiher, R\"{o}dl, Sales, and Schacht initiated the study of relative Tur\'{a}n densities of ordered graphs and showed that it is more subtle and interesting than the unordered case. For an ordered graph $F$, its relative Tur\'{a}n density, $\rord(F)$, is the greatest $\edged$ such that every ordered graph $G$ has an $F$-free subgraph with at least $\edged e(G)$ edges. 
	
	This paper contains two main results about relative Tur\'{a}n densities. First, we find a family of host graphs that is optimal for all $F$.
	Second, we characterise the ordered graphs with zero relative Tur\'{a}n density: precisely those with no monotone path of length two.
\end{abstract}

\renewcommand{\thefootnote}{\fnsymbol{footnote}} 

\footnotetext[0]{\emph{2020 MSC}: 05C35 (Extremal problems in graph theory)}

\footnotetext[2]{Department of Mathematics, University College London, UK (\textsf{\{\href{mailto:f.illingworth@ucl.ac.uk}{f.illingworth},\href{mailto:arjun.ranganathan.24@ucl.ac.uk}{arjun.ranganathan.24},\href{mailto:ella.williams.23@ucl.ac.uk}{ella.\allowbreak williams.23}\}@ucl.ac.uk}). Research of FI supported by the Heilbronn Institute for Mathematical Research. Research of EW supported by the Martingale Foundation.}

\footnotetext[1]{Mathematics Institute, University of Warwick, UK (\textsf{\href{mailto:lversteegen.math@gmail.com}{lversteegen.math@gmail.com}}). Research carried out while at Department of Mathematics, The London School of Economics, UK.}

\renewcommand{\thefootnote}{\arabic{footnote}} 

\section{Introduction}

Tur\'{a}n problems are some of the oldest and most fundamental in graph theory. The archetypal question asks, given a graph $F$ and host graph $G$, for the greatest edge density of an $F$-free subgraph of $G$. That is, for the value of
\begin{equation*}
	\rho(F, G) \coloneqq \max\set[\bigg]{\frac{e(G')}{e(G)} \colon G' \subseteq G, F \nsubseteq G'}.
\end{equation*}
The classical Tur\'{a}n problem, solved by the theorems of Mantel~\cite{Mantel1907}, Tur\'{a}n~\cite{Turan1941}, and Erd\H{o}s, Stone, and Simonovits~\cite{ErdosStone1946,ErdosSimonovits1966}, addresses the case where the host graph $G$ is complete. Together they show that the \defn{Tur\'{a}n density} of a graph $F$ is
\begin{equation}\label{eq:ESS}
	\pi(F) \coloneqq \lim_{n \to \infty} \rho(F, K_n) = 1 - \frac{1}{\chi(F) - 1}.
\end{equation}
Tur\'{a}n problems on non-complete host graphs have been widely studied (see for example \cite{Chung1992,SzaboVu2003,KRS2004,AKS2007,FurediOzkahya2011,ConlonGowers2016,Schacht2016}) although the problems are typically much more difficult. For example, Erd\H{o}s's \$100 problem~\cite{Erdos1984,Erdos1990} from 1984 of determining $\lim_{d \to \infty} \rho(C_4, Q_d)$ remains open to this day~\cite{Bloom}.

Given the interest in this class of problems, it is natural to ask for the infimum of $\rho(F, G)$ over all host graphs $G$, which is called the \defn{relative Tur\'{a}n density} of $F$ and denoted $\rho(F)$. That is, to ask for the greatest $\edged \in [0, 1]$ such that every graph $G$ has an $F$-free subgraph on at least $\edged e(G)$ edges. However, this infimum is always given by $\pi(F)$ and so complete graphs are universally hardest to make $F$-free\footnote{The upper bound $\inf_G \rho(F, G) \leq 1 - 1/(\chi(F) - 1)$ follows from \eqref{eq:ESS}. To see that $\inf_G \rho(F, G) \geq 1 - 1/(\chi(F) - 1)$, colour the vertices of $G$ randomly with $\chi(F) - 1$ colours and keep only the edges between vertices of distinct colours.}.

Reiher, R\"{o}dl, Sales, and Schacht~\cite{relative-ordered-turan-origin} proved that the situation is more interesting and nuanced for ordered graphs, which are graphs whose vertex set is equipped with a total ordering.  
For a survey on extremal problems in ordered graphs see Tardos's ICM talk~\cite{Tardos2018} or its extended version~\cite{Tardos2019}.
Analogously to graphs, the Tur\'{a}n problem for ordered graphs asks, given an ordered graph $F$ and ordered host graph $G$, for \defn{$\rord(F, G)$}, the greatest density of an $F$-free subgraph of $G$. When the host graph is complete, Pach and Tardos~\cite{PachTardos} answered the problem to leading order by proving the ordered analogue of the Erd\H{o}s-Stone-Simonovits theorem:
\begin{equation*}
	\pord(F) \coloneqq \lim_{n \to \infty} \rord(F, K_n) = 1 - \frac{1}{\cord(F) - 1},
\end{equation*}
where $\cord(F)$ is the \defn{ordered}/\defn{interval chromatic number}. While it remains true that the relative Tur\'{a}n density, \defn{$\rord(F)$}, of an ordered graph satisfies
\begin{equation}\label{eq:pi-upper}
	\rord(F) \coloneqq \inf_G \rord(F, G) \leq \inf_n \rord(F, K_n) = 1 - \tfrac{1}{\cord(F) - 1},
\end{equation}
Reiher, R\"{o}dl, Sales, and Schacht showed that equality does not hold in general. They did so by proving that for a certain family of graphs $R(m, d)$ (see \cref{subsec:binary}), first constructed in \cite{Gmd-origin}, the monotone path of length $k$, $\vec{P}_{k + 1}$, satisfies
\begin{equation*}
	\rord(\vec{P}_{k + 1}) = \lim_{d \to \infty} \lim_{m \to \infty} \rord(\vec{P}_{k + 1}, R(m, d)) = \tfrac{1}{2}\bigl(1 - \tfrac{1}{k}\bigr),
\end{equation*}
while $\pord(\vec{P}_{k + 1}) = 1 - 1/k$ is twice as large. In particular, the family of complete ordered graphs is not a universally optimal host for the Tur\'{a}n problem in ordered graphs, and it seems that generally, $\rord(F)$ is much harder to determine than $\rho(F)$. Thus, it is perhaps surprising that for every ordered graph $F$, whatever value $\rord(F)$ may take, that value is achieved by the very family $R(m, d)$. This is our first result.

\begin{restatable}{theorem}{universal} \label{thm:universal}
	For every ordered graph $F$, $\rord(F) = \lim_{d \to \infty} \lim_{m \to \infty} \rord(F, R(m, d))$.
\end{restatable}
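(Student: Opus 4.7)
The inequality $\rord(F) \leq \lim_{d \to \infty} \lim_{m \to \infty} \rord(F, R(m,d))$ is immediate since $\rord(F)$ is by definition the infimum of $\rord(F, G)$ over all ordered host graphs $G$. All the content lies in the reverse direction: for every ordered graph $G$ and every $\eta > 0$, one must exhibit $m, d$ with $\rord(F, G) \ge \rord(F, R(m,d)) - \eta$.

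My plan is, for sufficiently large $m$ and $d$, to produce a random \emph{ordered embedding} $\phi \colon V(G) \hookrightarrow V(R(m,d))$, i.e.\ an injective order-preserving map that is also a graph homomorphism. Existence of such embeddings (a universality statement for $R(m,d)$) should follow from the recursive binary structure used to define $R(m,d)$ later in the paper: built from $d$ levels of a binary construction over an alphabet of size $m$, this graph should contain every ordered graph on at most $m$ vertices once $d$ is taken large enough. Given such a $\phi$ and an optimal $F$-free subgraph $H \subseteq R(m,d)$, define
\[
G' \coloneqq \set[\big]{uv \in E(G) \colon \phi(u)\phi(v) \in E(H)}.
\]
Any ordered copy of $F$ in $G'$ would map under $\phi$ — order-preserving, injective, edge-preserving — to an ordered copy of $F$ inside $H$, contradicting $F$-freeness of $H$. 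Hence $G'$ is $F$-free.

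To control the edge count $e(G')$, the crucial additional property is that $\phi$ be \emph{edge-uniform}: for every edge $uv \in E(G)$, the induced image $(\phi(u), \phi(v))$ should be approximately uniformly distributed on $E(R(m,d))$. Granted this, $\Pr[\phi(u)\phi(v) \in E(H)] \approx e(H)/e(R(m,d)) = \rord(F, R(m,d))$ for each $uv \in E(G)$, and linearity of expectation gives $\mathbb{E}[e(G')] \ge (\rord(F, R(m,d)) - \eta) \cdot e(G)$. Some realisation meets the expectation, yielding $\rord(F, G) \ge \rord(F, R(m,d)) - \eta$ and, on sending $m, d \to \infty$, the desired bound.

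The main obstacle is producing an edge-uniform random embedding. Producing \emph{some} embedding should follow from the universality property discussed above, but obtaining a distribution whose induced edge-image is approximately uniform on $E(R(m,d))$ requires exploiting the high symmetry of $R(m,d)$. I would attempt this by composing a fixed embedding with random order-preserving automorphisms of $R(m,d)$ arising from independent random relabellings of the alphabet at each of the $d$ levels of the binary construction. This action of random symmetries should mix over $E(R(m,d))$ well enough to yield the required uniformity as $m, d \to \infty$. Once edge-uniformity is in place, the rest of the argument is a one-line linearity-of-expectation computation.
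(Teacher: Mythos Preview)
Your high-level strategy is exactly the one the paper pursues: find a random ordered embedding of an arbitrary host graph into (something related to) $R(m,d)$ that is ``edge-uniform'', then pull back an extremal $F$-free subgraph and apply linearity of expectation. The paper even motivates Lemma~3.1 in precisely these terms. So the overall plan is right.

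The genuine gap is your mechanism for edge-uniformity. You propose to compose a fixed embedding with ``random order-preserving automorphisms of $R(m,d)$ arising from independent random relabellings of the alphabet at each of the $d$ levels''. This cannot work: the alphabet at each of the $d$ binary coordinates is $\set{0,1}$, whose only order-preserving permutation is the identity, and permuting the $[m]$-coordinate within blocks is not in general an automorphism of $R(m,d)$ (which is a specific sample of a random graph, with essentially no nontrivial symmetry). More fundamentally, the paper explains just before Lemma~3.1 why any symmetry- or uniform-sampling-based approach is doomed: for a long monotone path, \emph{every} ordered embedding necessarily biases toward short-range edges, and the first and last edges of the path can only land on a tiny fraction of host edges. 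The actual construction in Lemma~3.1 is delicate and non-symmetric: one first chooses a random short window of levels, then a random $(h-1)$-subset of that window to serve as the $\delta$-values $\ell_1<\cdots<\ell_{h-1}$ of consecutive embedded vertices, and only then samples the bit-strings. The window must be short relative to $d$ (so each $\ell_i$ is nearly uniform on the levels) but long relative to $h$ (to avoid the digit-correlations illustrated in the paper's discussion); getting this right requires the three quantitative Lemmas~3.2--3.4.

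There is a second piece you do not anticipate. The paper does not embed directly into $R(m,d)$; it embeds into the complete graph on $\set{0,1}^d$, and separately proves (Lemma~3.5, via the regularity lemma) that dense subgraphs of $R(m,d)$ can be ``collapsed'' to graphs on $\set{0,1}^d$ with high average level-density. Your direct route would additionally require $R(m,d)$ to be universal for ordered graphs \emph{as a graph homomorphism target}, and your edge-uniformity to hold across its very inhomogeneous level structure; the paper's detour through $\set{0,1}^d$ and the regularity lemma is what makes the random-embedding computation tractable.
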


This theorem reduces determining $\rord(F)$ to solving the Tur\'{a}n problem on the specific host graph $R(m, d)$. Our next concern is to develop tools to solve this problem. The ordered graphs $R(m, d)$ are quite sparse when $d$ is large, which creates technical difficulties. We rectify this by showing that, to determine $\rord(F)$, it suffices to find $F$ in graphs on $\set{0, 1}^d$ that have a powerful local density property we call \defn{$(\edged, C)$-richness}. The definition of $(\edged, C)$-richness is deferred to \cref{subsec:binary}.

\begin{restatable}{theorem}{rhorichequivalence}\label{thm:rho-rich-equivalence}
	For every ordered graph $F$,
	\begin{equation*}
		\rord(F) = \inf\set{\edged \in [0, 1] \colon \exists C \textup{ such that every $(\edged, C)$-rich graph contains } F}.
	\end{equation*}
\end{restatable}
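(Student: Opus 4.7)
My plan is to prove both inequalities between $\rord(F)$ and the right-hand infimum, denoted $\edged^{*}$, by using \cref{thm:universal} to reduce from arbitrary host graphs to the specific family $R(m,d)$. Since $R(m,d)$ lives on a vertex set naturally indexed by (a blow-up of) $\{0,1\}^{d}$, and $(\edged,C)$-richness is a local density condition on exactly such graphs, one expects a clean dictionary between the Tur\'{a}n density on $R(m,d)$ and richness, and the theorem should reflect this dictionary directly.

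For the upper bound $\rord(F)\le\edged^{*}$, fix $\edged>\edged^{*}$ and let $C$ be such that every $(\edged,C)$-rich graph contains $F$. By \cref{thm:universal} it suffices to show that, for all sufficiently large $m$ and $d$, every subgraph $G'\subseteq R(m,d)$ of density exceeding $\edged+\eta$ contains $F$. The plan is a cleaning argument: iteratively discard vertices and coordinates of $\{0,1\}^{d}$ that locally witness $G'$-density below $\edged$. Because the global density of $G'$ strictly exceeds $\edged$, a positive fraction of the cube survives the cleaning; by construction the surviving structure satisfies the $(\edged,C)$-richness condition, and the hypothesis then provides the desired copy of $F$.

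For the lower bound $\rord(F)\ge\edged^{*}$, fix $\edged<\edged^{*}$. By definition of $\edged^{*}$, for every $C$ there exists an $(\edged,C)$-rich $F$-free graph $H_{C}$ on some $\{0,1\}^{d_{C}}$. By \cref{thm:universal} again, it suffices to exhibit, inside $R(m,d')$ for arbitrarily large $m$ and $d'$, an $F$-free subgraph of density at least $\edged-o(1)$. The plan is to lift $H_{C}$: embed $\{0,1\}^{d_{C}}$ into $\{0,1\}^{d'}$ via a coordinate projection (for $d'\ge d_{C}$), pull $H_{C}$ back to a subgraph of $R(m,d')$, and observe that the lifted graph is $F$-free because any copy of $F$ in it would project to a copy of $F$ in $H_{C}$. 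The richness quantifier is exactly what is engineered to guarantee that this lifted subgraph has relative density tending to $\edged$.

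The main obstacle is calibrating the local density condition packaged into $(\edged,C)$-richness against the global edge-count in $R(m,d)$. In the upper bound direction, the cleaning must be iterative and carefully tuned so that neither the cube shrinks too quickly nor the surviving density drops below $\edged$, in the style of standard regularity or dependent-random-choice arguments. In the lower bound, ensuring that the lifted density concentrates near $\edged$ in a typical $R(m,d')$ (and not merely on average) will likely invoke Chernoff-type estimates, with the two iterated limits $m\to\infty$ and $d\to\infty$ supplied by \cref{thm:universal} absorbing the residual slack in both passes.
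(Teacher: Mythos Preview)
Your plan relies on \cref{thm:universal} as a black box, but in the paper \cref{thm:universal,thm:rho-rich-equivalence} are proved simultaneously via a single chain of inequalities; there is no standalone proof of \cref{thm:universal} that you could cite. This is not a fatal objection in principle, but it does mean that both directions of your argument have to stand on their own, and one of them does not.

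The serious gap is in your lower bound $\rord(F)\ge\edged^{*}$. You take an $(\edged,C)$-rich $F$-free graph $H_C$ on $\{0,1\}^{d_C}$ and propose to pull it back along a coordinate projection $\{0,1\}^{d'}\to\{0,1\}^{d_C}$ to get an $F$-free subgraph of $R(m,d')$ of density close to $\edged$. But $(\edged,C)$-richness only says that $C$ levels of $H_C$ are $\edged$-dense; the remaining $d_C-C$ levels may be empty, and in any case the pullback has no edges at levels $>d_C$. Since the levels of $R(m,d')$ contribute essentially equal numbers of edges, the pulled-back subgraph has relative density at most of order $C/d'$, which tends to $0$ as $d'\to\infty$. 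Your claim that ``the richness quantifier is exactly what is engineered to guarantee that this lifted subgraph has relative density tending to $\edged$'' is therefore false. The paper avoids lifting altogether and proves this direction directly (without invoking $R(m,d)$): given a hard host $H$ witnessing $\rord(F,H)\le\rord(F)+\eps$, \cref{lemma:uniform-tiling} produces a random ordered embedding $\Phi$ of $H$ into the complete graph on $\{0,1\}^d$ that is approximately level-unbiased, so that for any $(\rord(F)+\eps,C)$-rich $G$ one has $\bE[e(G\cap\Phi(H))]>(\rord(F)+\eps/4)\,e(H)$, forcing $F\subseteq G$.

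Your upper bound sketch is closer in spirit to the paper's \cref{lemma:reduction}, but the phrase ``cleaning argument'' hides the real difficulty: a dense subgraph $G'\subseteq R(m,d)$ lives on $\{0,1\}^d\times[m]$, and selecting one vertex per block uniformly at random gives an expected level-$\ell$ density of order $2^{\ell-d}$ in the resulting $\{0,1\}^d$-graph, which is far below $\edged$ at low levels. The paper resolves this by applying the refined regularity lemma to $G'$ (with the block partition as seed), building a reduced graph $H$ on $\{0,1\}^d\times[m']$ for bounded $m'$, and only then sampling one vertex per block; the graph building lemma then transfers a copy of $F$ back into $G'$. Your proposal gestures at regularity but does not identify this sparsity obstruction or how it is overcome.
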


In upcoming work we will use this theorem to investigate $\rord(F)$ for many ordered graphs $F$. In the present paper, we use it to determine which ordered graphs have vanishing relative Tur\'{a}n density. It follows from $\rord(\vec{P}_3) = 1/4$ that the relative Tur\'{a}n density of an ordered graph which contains a monotone path of length two is positive (and, in fact, at least $1/4$). Of course, \eqref{eq:pi-upper} implies that any ordered graph with interval chromatic number two has zero relative Tur\'{a}n density. However, the class of ordered graphs with no monotone path of length two is much larger (it contains graphs with arbitrarily large interval chromatic number). King, Lidick\'{y}, Ouyang, Pfender, Wang, and Xiang~\cite{KLOPWX} showed that certain families of ordered matchings have zero relative Tur\'{a}n density.
We prove that this holds for every ordered graph with no monotone path of length two, which provides a complete characterisation.

\begin{restatable}{theorem}{vanishing}\label{thm:vanishing}
	For every ordered graph $F$, $\rord(F) = 0$ if and only if $F$ does not contain a monotone path of length two.
\end{restatable}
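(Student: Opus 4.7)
The forward direction is immediate: since $\rord(\cdot)$ is monotone under subgraph containment and the paper already notes $\rord(\vec{P}_3) = 1/4$, every $F$ containing a monotone path of length two satisfies $\rord(F) \geq 1/4 > 0$.

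For the reverse, assume $F$ has no monotone $\vec{P}_3$. The key structural observation is that every vertex $v$ of $F$ is then either a \emph{source} (all its neighbors strictly greater than $v$) or a \emph{sink} (all its neighbors strictly smaller than $v$); otherwise $v$ would sit at the middle of a monotone path of length two. Writing $V(F) = S \sqcup T$ for this partition, $F$ is bipartite between $S$ and $T$ and every edge $st$ runs from an $s \in S$ to a $t \in T$ with $s < t$ in the order of $F$, although $S$ and $T$ may be freely interleaved in that order. This is the mild weakening of the interval-chromatic-number-two case which makes the problem interesting.

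To conclude, I would invoke \cref{thm:rho-rich-equivalence}: it suffices to show that for every $\edged > 0$ there exists $C$ such that every $(\edged, C)$-rich graph $G$ contains a copy of $F$. The plan is to embed the vertices $v_1 < \dotsb < v_n$ of $F$ into $G$ one at a time in order, maintaining a large \emph{candidate region} $U \subseteq V(G)$ lying to the right (in the host order) of the last embedded image. When we process a sink $v_i$, all of its $F$-neighbors precede $v_i$ and have images already chosen, so $g(v_i)$ must be taken from the intersection of their host neighborhoods inside $U$. When we process a source $v_i$, it has no previously embedded neighbors, and $g(v_i)$ need only be chosen so that its forward neighborhood in $U$ is still large enough to receive the later sinks adjacent to $v_i$. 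Iterating the richness condition $|V(F)|$ times, once per vertex, should guarantee that each candidate set is non-empty.

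The main obstacle will be calibrating the quantitative loss at each step: every sink placement restricts $U$ to a common neighborhood, and every source placement must occur inside a vertex set whose $(\edged, C)$-richness (or an adequate local substitute for it) is inherited at that point. I expect that setting $C$ sufficiently large in terms of $|V(F)|$ and $\edged$, and carrying out the embedding via a dependent-random-choice / iterated cleaning argument which processes the source–sink incidences jointly rather than one edge at a time, will absorb this loss and close the argument.
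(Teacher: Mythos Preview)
Your framework matches the paper's: the forward direction via $\rord(\vec{P}_3)=1/4$, the source/sink decomposition of $F$, and the reduction to finding $F$ in every $(\edged,C)$-rich graph via \cref{thm:rho-rich-equivalence} are exactly the same starting points. The divergence, and the gap, is in the embedding step.

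Your left-to-right greedy scheme with ``dependent random choice / iterated cleaning'' never touches the level structure of $\{0,1\}^d$, but that structure is where all the information in $(\edged,C)$-richness lives. Richness says nothing about raw edge density; it says that on each of $C$ levels $\ell$, an $\edged$-fraction of the $\pairs_{\ell,d}$ potential level-$\ell$ pairs are present. When you place a source $g(v_1)$ and then move the candidate region $U$ rightward past later images, you discard the high-level (nearby) forward neighbours of $g(v_1)$ and keep only those at low levels; meanwhile the future sinks adjacent to $v_1$ need to land in $N^+(g(v_1))\cap U$, and the future sources in $U$ need their own rich structure. You acknowledge this is ``the main obstacle'' but offer no mechanism for inheriting any richness-like property to $U$, and generic dependent random choice does not supply one: the relevant neighbourhoods are unions of pieces living in nested fundamental intervals of wildly different sizes, not subsets of a single dense bipartite pair.

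The paper resolves this with three ideas you are missing. First, it reduces to the universal graphs $H_k$ (\cref{lemma:Hk_sub}) and inducts on $k$ rather than embedding $F$ directly. Second, the workhorse is \cref{lemma:vanishing-aux}: in any $(\eps,C)$-rich $G$ one can find a fundamental interval $I$, a pivot $x\in\lhs(I)$, and a subgraph $G'\subseteq G[\rhs(I)]$ that is itself $(\eta,\eps C/24)$-rich and in which the larger endpoint of \emph{every} edge lies in $N^+(x)$. This is the ``local substitute'' for richness you allude to but do not construct; it guarantees that after embedding $H_{k-1}$ into $G'$ by induction, all of its sinks are automatically neighbours of $x$. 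Third, before applying the lemma the paper \emph{sets aside}, for each vertex, its forward edges at its single highest level; this later furnishes a neighbour $y$ of $x$ with $x<y<\rhs(I)$, which becomes the image of the sink $(1,1)$. Note that this means the embedding is \emph{not} left-to-right: the second vertex of $H_k$ is placed last. Your scheme would need a comparably explicit, level-aware zooming-in step to work.
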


The rest of the paper is organised as follows. We introduce some ideas and tools in \cref{sec:preliminaries} including the definitions of \emph{richness} and $R(m, d)$. In \cref{sec:universality}, we prove \cref{thm:universal,thm:rho-rich-equivalence}. \cref{sec:vanishing} is dedicated to the proof of \cref{thm:vanishing}. We conclude by discussing some open problems in \cref{sec:open}.

\section{Tools and terminology}\label{sec:preliminaries}

As mentioned in the introduction, our host graph $R(m,d)$ originates from \cite{Gmd-origin} and was used already in \cite{relative-ordered-turan-origin} to determine $\rord$ for ascending paths. In both articles, $R(m,d)$ is constructed recursively in $d$ by an amalgamation of two copies of $R(m,d-1)$. While this allows for elegant inductive proofs, it can make it difficult to see the structure of the graph as a whole. In this section, we will set up the notation, tools, and terminology required to construct and study $R(m,d)$ in a more explicit manner without recursion. 

\subsection{Ordered graphs}

Let $G$ be an ordered graph. Given $u\in V(G)$, we say an edge $uv$ is a \defn{forward edge} for $u$ if $u < v$, and $v$ is a \defn{forward neighbour} of $u$ in this case. We define \defn{backward edges} and \defn{backward neighbours} analogously, if $v < u$. 

\subsection{Binary ordering}

We will often consider ordered graphs whose vertices are labelled by binary strings of the form $x\in \set{0, 1}^d$, indexed as $x = x_1x_2\dots x_d$.
For distinct binary strings $x, y \in \set{0, 1}^d$, we define
\begin{equation*}
	\delta(x, y) \coloneqq \min\set{i \colon x_i \neq y_i},
\end{equation*}
and so, for example, $\delta(0011\mathbf{1}0, 0011\mathbf{0}1) = 5$.
We will always view $\set{0, 1}^d$ as being ordered lexicographically.

\begin{definition}[lexicographic ordering]
	For distinct $x, y \in \set{0, 1}^d$, define \defn{$x < y$} if $x_{\delta(x, y)} = 0$ and $y_{\delta(x, y)} = 1$.
\end{definition}

If one interprets each string as a binary number (so $x$ corresponds to $\sum_i x_i 2^{d - i}$), then this corresponds to the usual order on the non-negative integers.
The set $\set{0, 1}^d$ with its lexicographic ordering splits naturally into intervals.

\begin{definition}[fundamental intervals]
	For $\ell \in \set{0,1,\dots,d}$, let \defn{$\cP_\ell$} be the family of equivalence classes of the relation on $\set{0, 1}^d$ given by $\delta(u,v)> \ell$. Each $I \in \cP_\ell$ is an interval in $\set{0, 1}^d$ consisting of strings that agree on the first $\ell$ bits. We call these intervals the \defn{fundamental intervals at level $\ell$}. Given $I\in \cP_\ell$, we denote the half of $I$ in $\cP_{\ell+1}$ for which the $(\ell+1)$th digit is $0$ by \defn{$\lhs(I)$}, and the other half by \defn{$\rhs(I)$}.
\end{definition}

Note that $\abs{\cP_\ell} = 2^\ell$ and each interval $I \in \cP_\ell$ has size $2^{d - \ell}$. Observe that $\cP_b$ refines\footnote{A partition $\cP$ \defn{refines} a partition $\cQ$ if, for all $P \in \cP$, there is some $Q \in \cQ$ such that $P \subseteq Q$.} $\cP_a$ for $a \leq b$. 

\subsection{Graphs on \texorpdfstring{$\set{0, 1}^d$}{{0,1}d} and \texorpdfstring{$\set{0, 1}^d \times [m]$}{{0,1}d times m}}\label{subsec:binary}

We now define notation related to a graph $G$ on vertex set $\set{0, 1}^d$, ordered lexicographically.
We say that $uv \in E(G)$ is a \defn{level $\ell$-edge} if $\delta(u,v) = \ell$. 
We denote the set of level $\ell$-edges of $G$ by \defn{$E_{\ell}(G)$} and their number by \defn{$e_{\ell}(G)$}. When $A$ is a subset of $\set{0, 1}^d$ we write \defn{$E_{\ell, G}(A)$} and \defn{$e_{\ell, G}(A)$} to mean $E_{\ell}(G[A])$ and $e_{\ell}(G[A])$, respectively, where $G[A]$ is the induced subgraph of $G$ with vertex-set $A$. We may omit $G$ from the subscripts when the context is clear. 
To help us normalise the number $e_{\ell, G}(A)$ correctly, we write
\begin{equation*}
	\pairs_{\ell}(A) \coloneqq \abs{\set{(u, v) \in A \times A \colon u < v, \ \delta(u, v) = \ell}}
\end{equation*}
for the total number of pairs $\set{u, v} \subset A$ for which $\delta(u,v) = \ell$ (note that this is independent of $G$). 
We write
\begin{equation*}
	\pairs_{\ell, d} \coloneqq \pairs_\ell(\set{0, 1}^d) = 2^{2d - \ell - 1}.
\end{equation*}

We are particularly interested in levels on which $G$ has a large proportion of all possible edges.

\begin{definition}[rich levels and graphs]\label{def:rich}
	For a graph $G$ on $\set{0, 1}^d$ and $\edged \in [0, 1]$, we say that a level $\ell \in [d]$ is \defn{$\edged$-rich with respect to $G$} if $e_{\ell}(G) \geq \edged\pairs_{\ell,d}$. We say that $G$ is \defn{$(\alpha, C)$-rich} for some $C>0$ if it contains at least $C$ levels that are $\alpha$-rich. 
\end{definition}

We will also need to discuss ordered graphs on $\set{0, 1}^d \times [m]$. Their vertex set is ordered lexicographically: $(x, i) < (y, j)$ if $x < y$ or $x = y$ and $i < j$. For each $x \in \set{0, 1}^d$, we define $B_x = \set{x} \times [m]$ and refer to the $B_x$ as \defn{blocks}. Just as for graphs on $\set{0,1}^d$, we say an edge $uv$ is a \defn{level $\ell$-edge} if $u \in B_x$ and $v \in B_y$ where $\delta(x, y) = \ell$ and we write $e_\ell(G)$ for the number of level $\ell$-edges in a graph $G$ on $\set{0, 1}^d \times [m]$.

\begin{proposition}\label{prop:Rmd-properties}
	For all $d, m \in \N$, there exists a graph \defn{$R(m, d)$} on $\set{0, 1}^d \times [m]$ such that the following holds. For every $d \in \N$, $\eps > 0$, and for all sufficiently large $m$\textup{:}
	\begin{enumerate}[label = \upshape{(\roman{*})}]
		\item for every $\ell \in [d]$, $e_\ell(R(m, d)) = (1 \pm \epsilon) 2^{d - 1} m^2$ and so $e(R(m, d)) = (1 \pm \eps) d 2^{d - 1} m^2$\textup{;}
		\item for every $x, y \in \set{0, 1}^d$ and every $P \subseteq B_x$ and $Q \subseteq B_y$, each of size at least $m^{2/3}$, the number of edges in $R(m, d)$ between $P$ and $Q$ is at most $(1 + \eps) 2^{-d + \delta(x, y)} \abs{P} \abs{Q}$.
	\end{enumerate}
\end{proposition}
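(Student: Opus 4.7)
The plan is to define $R(m,d)$ as a random graph and then verify both properties via Chernoff concentration. Specifically, between each pair of blocks $B_x, B_y$ with $x \neq y$, independently include each of the $m^2$ potential edges with probability $p_{\delta(x,y)} \coloneqq 2^{-d+\delta(x,y)}$. Observe that $p_\ell \in (0,1]$, with $p_d = 1$ corresponding to pairs of ``neighbouring'' blocks (those differing only in the last bit) being joined completely.

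For property (i), the expected number of level-$\ell$ edges is exactly $\pairs_{\ell,d} \cdot p_\ell \cdot m^2 = 2^{2d-\ell-1} \cdot 2^{-d+\ell} \cdot m^2 = 2^{d-1}m^2$, matching the target. Since $e_\ell(R(m,d))$ is a sum of independent Bernoulli variables with mean $\Theta_d(m^2)$, Chernoff's inequality gives $\Pr\bigl[\abs{e_\ell - \bE e_\ell} > \eps \bE e_\ell\bigr] \leq 2\exp(-c_\eps 2^{d-1} m^2)$, which is $o(1)$ as $m \to \infty$; a union bound over the $d$ levels settles (i). For property (ii), fix $x, y \in \set{0,1}^d$ with $\delta(x,y) = \ell$ and $P \subseteq B_x$, $Q \subseteq B_y$ each of size at least $m^{2/3}$. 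The number of edges between $P$ and $Q$ is a sum of independent Bernoullis with mean $p_\ell \abs{P}\abs{Q} \geq 2^{-d} m^{4/3}$, so Chernoff yields $\Pr\bigl[e(P,Q) > (1+\eps) p_\ell \abs{P}\abs{Q}\bigr] \leq \exp(-c_\eps 2^{-d} m^{4/3})$. The number of quadruples $(x, y, P, Q)$ is at most $2^{2d} \cdot 2^{2m} = \exp(O_d(m))$, which the per-quadruple failure probability $\exp(-\Theta_{d,\eps}(m^{4/3}))$ crushes for $m$ large.

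The only delicate point is the union bound in (ii), which must beat $2^{2m}$ subset choices per pair of blocks. The hypothesis $\abs{P}, \abs{Q} \geq m^{2/3}$ is precisely what forces the Chernoff exponent to scale like $m^{4/3}$ rather than like $m$, and this superlinear gap over $\log(2^{2m}) = \Theta(m)$ is what allows the union bound to succeed; the exponent $2/3$ is not tight but is chosen for convenience. A small additional diagonal argument (applying the above for a sequence $\eps_k \downarrow 0$) yields a single graph $R(m, d)$, depending only on $m$ and $d$, that satisfies the conclusion simultaneously for every fixed $\eps > 0$ and all $m$ sufficiently large.
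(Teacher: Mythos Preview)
Your proposal is correct and follows essentially the same approach as the paper: define $R(m,d)$ as a sample of the random graph on $\set{0,1}^d \times [m]$ with edge probability $2^{-d+\delta(x,y)}$ between blocks, and verify both properties by Chernoff plus union bounds. The paper's proof is terse (it just says ``two simple applications of Chernoff's inequality''), and you have correctly filled in the details it omits, including the key observation that the $m^{4/3}$ exponent from the $\abs{P},\abs{Q} \geq m^{2/3}$ hypothesis beats the $2^{2m}$ subset count in the union bound for~(ii).
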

\begin{proof}
	For all $d, m\in \N$, consider the random graph $\cR(m,d)$ on $\set{0,1}^d\times [m]$ where an $(x,i)(y,j)$ is present with probability $2^{-d+\delta(x,y)}$ independently of all other edges. Two simple applications of Chernoff's inequality show that for each $d\in \N$ and sufficiently large $m$, $\cR(m,d)$ has the properties above with high probability. We take $R(m,d)$ to be an arbitrary sample of $\cR(m,d)$, such that for $m$ sufficiently large, both properties are satisfied.
\end{proof}

Here is one useful corollary of the first property that relates to richness. Note that the denominator $2^{d - 1} m^2$ is roughly $e_\ell(R(m, d))$.

\begin{proposition}\label{prop:Rmd-richness}
	Let $d\in \N$ and $\eps > 0$. Then, for all sufficiently large $m$ and every $\edged \in [0, 1]$, every subgraph $G$ of $R(m, d)$ with at least $(\edged + 2 \eps) \cdot e(R(m, d))$ edges satisfies
	\begin{equation*}
		\tfrac{1}{d} \sum_{\ell = 1}^d \frac{e_{\ell}(G)}{2^{d - 1} m^2} \geq \edged + \epsilon.
	\end{equation*}
\end{proposition}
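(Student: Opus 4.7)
The plan is straightforward: the statement is essentially an averaging argument combined with the approximate regularity across levels guaranteed by \cref{prop:Rmd-properties}(i).

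First, I would fix a small auxiliary parameter $\eps' > 0$ in terms of $\eps$; specifically, $\eps' \coloneqq \eps/(1 + 2\eps)$ will suffice (so that $(\edged + 2\eps)(1 - \eps') \geq \edged + \eps$ for all $\edged \in [0,1]$). Apply \cref{prop:Rmd-properties}(i) with this $\eps'$ in place of $\eps$: for all sufficiently large $m$, we have the upper bound $e(R(m,d)) \leq (1 + \eps') d \cdot 2^{d-1} m^2$. Actually, to get the cleanest bound in the right direction I would use $e(R(m,d)) \leq d \cdot 2^{d-1} m^2/(1 - \eps')$ or, more simply, rewrite things via the level decomposition $e(G) = \sum_{\ell=1}^d e_\ell(G)$ directly.

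The computation then reads: by hypothesis,
\begin{equation*}
	\sum_{\ell=1}^d e_\ell(G) = e(G) \geq (\edged + 2\eps) \cdot e(R(m,d)) \geq (\edged + 2\eps)(1 - \eps') \cdot d \cdot 2^{d-1} m^2,
\end{equation*}
using the lower bound $e(R(m,d)) \geq (1 - \eps') d \cdot 2^{d-1} m^2$ from \cref{prop:Rmd-properties}(i). Dividing by $d \cdot 2^{d-1} m^2$ gives
\begin{equation*}
	\tfrac{1}{d} \sum_{\ell=1}^d \frac{e_\ell(G)}{2^{d-1} m^2} \geq (\edged + 2\eps)(1 - \eps') \geq \edged + \eps,
\end{equation*}
by the choice of $\eps'$.

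There is no serious obstacle here; this is a routine consequence of the approximate uniformity of the level sizes in $R(m,d)$. The only care needed is in choosing $\eps'$ small enough relative to $\eps$ so that the multiplicative slack $(1 - \eps')$ can absorb the gap between $\edged + 2\eps$ and $\edged + \eps$ uniformly in $\edged \in [0,1]$.
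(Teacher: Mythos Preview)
Your argument is correct and essentially identical to the paper's: both use $\sum_\ell e_\ell(G) = e(G)$ together with the approximate uniformity from \cref{prop:Rmd-properties}(i) and a small multiplicative slack. The only cosmetic difference is that the paper applies the \emph{upper} bound $e(R(m,d)) \leq (1+\eps/2)\, d\, 2^{d-1} m^2$ (with input $\eps/2$) and simplifies $(\edged + 2\eps)/(1+\eps/2) \geq \edged + \eps$, whereas you use the \emph{lower} bound on $e(R(m,d))$ and a custom $\eps'$; these are interchangeable.
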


\begin{proof}
	For each level $\ell\in [d]$, we define $p_\ell = e_\ell(G)/(2^{d - 1} m^2)$. Now, \cref{prop:Rmd-properties}(i) with input $\eps/2$ implies
	\begin{equation*}
		\tfrac{1}{d} \sum_\ell p_\ell = \frac{1}{d \cdot 2^{d - 1} m^2} \cdot \sum_\ell e_\ell(G) \geq \frac{e(G)}{(1 + \epsilon/2) \cdot e(R(m, d))} \geq \frac{\edged + 2\epsilon}{1 + \epsilon/2} \geq \edged + \epsilon. \qedhere
	\end{equation*}
\end{proof}

\subsection{The regularity lemma}\label{subsec:regularity}

We will use Szemer\'{e}di's regularity lemma~\cite{Szemeredi1978regularity} together with some associated machinery in order to reduce the problem of proving upper bounds for $\rho_<(F)$ to finding a copy of $F$ in a rich graph (see \cref{lemma:reduction}).
For a pair of disjoint vertex sets $X, Y \subseteq V(G)$, we use $d(X, Y) = e(X, Y) \abs{X}^{-1} \abs{Y}^{-1}$ to denote the \defn{density} between $X$ and $Y$.

For $\eta > 0$, we say a pair $(A,B)$ of disjoint vertex sets is \defn{$\eta$-regular} if for all $X \subseteq A$ and $Y \subseteq B$ satisfying $\abs{X} \geq \eta \abs{A}$ and $\abs{Y} \geq \eta \abs{B}$, we have 
\begin{equation*}
	\abs{d(X,Y) - d(A,B)} \leq \eta.
\end{equation*}
A partition $\cQ = \set{V_1, \dots, V_t}$ of $V(G)$ is an \defn{equipartition} if $\abs[\big]{\abs{V_i} - \abs{V_j}} \leq 1$ for all $i$ and $j$. This implies that $\floor{\abs{V(G)}/t} \leq \abs{V_i} \leq \ceil{\abs{V(G)}/t}$ for all $i$.
Finally, $\cQ$ is \defn{$\eta$-regular} if $(V_i, V_j)$ is $\eta$-regular for all but at most $\eta \binom{t}{2}$ pairs $i < j$.

We need a version of the regularity lemma where the resulting $\eta$-regular partition refines some initial equipartition. The proof of this follows from the proof of the usual regularity lemma (see, for example, \cite[Lemma~1.15]{KomlosSimonovits1996}).

\begin{lemma}[refined regularity lemma]\label{lemma:regularity}
	For every $\eta > 0$ and $r\in \N$, there exists $L\in \N$ such that the following holds. Let $G$ be a graph and $\cQ$ be an equipartition of $V(G)$ into $r$ parts. Then there exists an $\eta$-regular equipartition $\cP$ of $G$ that refines $\cQ$, and such that each part of $\cQ$ is refined into at most $L$ parts.
\end{lemma}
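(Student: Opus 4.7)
The plan is to execute the standard energy-increment proof of Szemerédi's regularity lemma while maintaining throughout the invariant that the current equipartition refines $\cQ$. Recall the energy of a partition $\cP = \set{V_1, \ldots, V_t}$ of $V(G)$,
\begin{equation*}
q(\cP) \coloneqq \frac{1}{\abs{V(G)}^2} \sum_{i, j = 1}^{t} \abs{V_i} \abs{V_j} d(V_i, V_j)^2 \in [0, 1],
\end{equation*}
and the defect inequality: if $\cP$ is not $\eta$-regular then the common refinement of $\cP$ with a single witness subset chosen inside each irregular pair yields a refinement $\cP^{*}$ with $q(\cP^{*}) \geq q(\cP) + \eta^5$.

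Starting from $\cP_0 = \cQ$, the iteration proceeds as follows. Given an equipartition $\cP_i$ refining $\cQ$ that is not $\eta$-regular, first form the defect refinement $\cP_i^{*}$ as above. Because each part of $\cP_i$ lies in a single $\cQ$-class, the same is true of each part of $\cP_i^{*}$. Next, within each $\cQ$-class $Q$ separately, I would equipartition $Q$ into $M_i$ blocks of nearly equal size arranged so as to (nearly) refine $\cP_i^{*} \cap Q$: order the vertices of $Q$ so that each part of $\cP_i^{*} \cap Q$ comes in a consecutive run, and cut the resulting sequence into $M_i$ intervals of nearly equal size. Taking $\cP_{i + 1}$ to be the union of these local equipartitions gives a global equipartition into $r \cdot M_i$ parts that refines $\cQ$, whose parts agree with those of $\cP_i^{*}$ except on an $O(\abs{\cP_i^{*}}/M_i)$-fraction of boundary vertices. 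Choosing $M_i$ large enough in terms of $\eta$ and $\abs{\cP_i^{*}}$, this boundary effect changes the energy by less than $\eta^5 / 2$, so $q(\cP_{i + 1}) \geq q(\cP_i) + \eta^5 / 2$.

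Since $q$ stays in $[0, 1]$ throughout, the iteration must halt after at most $2 \eta^{-5}$ rounds at an $\eta$-regular equipartition refining $\cQ$. At each round the number of parts inside any fixed $\cQ$-class grows by a factor bounded in terms of $\eta$ and the current number of parts, so unrolling the recursion yields the advertised bound $L = L(\eta, r)$. The only genuine departure from the classical argument, and hence the main technical obstacle, is the equalisation step: standard proofs rebalance globally across $V(G)$, whereas here we must rebalance \emph{within each $\cQ$-class separately} to preserve the refinement of $\cQ$. The construction above handles this by equalising each $\cQ$-class locally in a way that nearly respects $\cP_i^{*}$, at the cost of only an asymptotically negligible perturbation of the energy.
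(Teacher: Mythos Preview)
Your proposal is correct and matches what the paper does: the paper gives no proof of its own, simply remarking that the result ``follows from the proof of the usual regularity lemma'' and citing \cite[Lemma~1.15]{KomlosSimonovits1996}. Your sketch is precisely that standard energy-increment argument with the equalisation step carried out within each $\cQ$-class, which is exactly the content of the cited reference.
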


We will also need the following building lemma (see, for example, \cite[Theorem~2.1]{KomlosSimonovits1996}).

\begin{lemma}[graph building lemma]\label{lemma:graph_building}
	Let $H$ be a graph \textup{(}on vertex set $\set{1, 2, \dots, \abs{V(H)}}$\textup{)} and $\lambda \in (0, 1)$. For all sufficiently small $\eta > 0$ the following holds. Suppose $V_1, \dots, V_{\abs{V(H)}}$ are sufficiently large pairwise disjoint vertex sets such that $(V_i, V_j)$ is $\eta$-regular of density at least $\lambda$ for each $ij \in E(H)$. Then there is a copy of $H$ with each vertex in the corresponding $V_i$.
\end{lemma}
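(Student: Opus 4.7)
The plan is a standard greedy embedding. Write $n = \abs{V(H)}$ and process the vertices of $H$ in the order $1, 2, \ldots, n$, embedding vertex $k$ as some $v_k \in V_k$. The central object is the collection of \emph{candidate sets}: after $v_1, \ldots, v_{k - 1}$ have been fixed, for each unembedded index $j \ge k$ define
\begin{equation*}
    C_j \coloneqq V_j \cap \bigcap_{\substack{i < k \\ ij \in E(H)}} N(v_i).
\end{equation*}
Thus $C_j \subseteq V_j$ is the set of vertices in $V_j$ still consistent with all edges of $H$ processed so far. The invariant I would maintain is $\abs{C_j} \geq (\lambda - \eta)^{n} \abs{V_j}$ for every $j \geq k$.

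At step $k$ I need to pick $v_k \in C_k$ so that for each $j > k$ with $kj \in E(H)$, the updated candidate set $C_j \cap N(v_k)$ still has size at least $(\lambda - \eta)\abs{C_j}$. For any such $j$, by the invariant $\abs{C_k} \geq (\lambda - \eta)^n \abs{V_k} \geq \eta \abs{V_k}$ and $\abs{C_j} \geq \eta \abs{V_j}$, provided $\eta$ is small enough that $(\lambda - \eta)^n \geq \eta$. The $\eta$-regularity of $(V_k, V_j)$ applied to these subsets gives $\abs{d(C_k, C_j) - d(V_k, V_j)} \leq \eta$, so the set of ``bad'' vertices $v \in C_k$ satisfying $\abs{N(v) \cap C_j} < (\lambda - \eta)\abs{C_j} \le (d(V_k,V_j) - \eta)\abs{C_j}$ has size at most $\eta \abs{V_k}$. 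A union bound over the at most $n$ choices of $j$ shows the total number of bad candidates is at most $n \eta \abs{V_k}$, which is strictly less than $\abs{C_k} \geq (\lambda - \eta)^n \abs{V_k}$ whenever $(\lambda - \eta)^n > n \eta$. Picking any non-bad $v_k$ preserves the invariant, and iterating for $n$ steps produces the desired copy of $H$.

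The only real obstacle is the bookkeeping of parameters: I would choose $\eta = \eta(\lambda, n)$ small enough that $(\lambda - \eta)^n > n \eta$, which is an elementary inequality holding for all sufficiently small $\eta > 0$. The ``sufficiently large'' hypothesis on the $V_i$ is mild, needed only to guarantee that $(\lambda - \eta)^n \abs{V_i} \geq 1$ at each step so that the candidate sets remain non-empty; concretely, $\abs{V_i} \geq (\lambda - \eta)^{-n}$ suffices. No deeper idea is required beyond the greedy argument, so I would expect this step to be the standard appearance of the ``key lemma'' and refer to \cite{KomlosSimonovits1996} for the full details.
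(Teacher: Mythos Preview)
Your argument is the standard greedy embedding proof of the Key Lemma and is correct; the paper itself does not prove this lemma but simply cites \cite[Theorem~2.1]{KomlosSimonovits1996}, which is exactly the reference you invoke. So your proposal aligns with the paper's treatment, supplying the routine details behind the citation.
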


\section{All: universally optimal host graphs}\label{sec:universality}

In this section we prove both our universality result, \cref{thm:universal}, and as the reduction to rich graphs, \cref{thm:rho-rich-equivalence}. Since it will be useful for determining $\rord(F)$ in future work, we prove the following mild strengthening.

\begin{theorem}\label{thm:rho-average-rich}
	For every ordered graph $F$,
	\begin{align*}
		\rord(F) & = \inf\set[\Big]{\edged \in [0, 1] \colon \forall \textup{ large $d$, every $G$ on $\set{0, 1}^d$ with $\tfrac{1}{d} \sum_\ell \frac{e_\ell(G)}{\pairs_{\ell, d}} \geq \edged$ contains $F$}} \\
		& = \inf\set{\edged \in [0, 1] \colon \exists C \textup{ such that every $(\edged, C)$-rich graph contains } F}.
	\end{align*}
\end{theorem}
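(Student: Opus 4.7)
Denote the three infima by $\rho_<(F)$, $\alpha_{\mathrm{avg}}$, and $\alpha_{\mathrm{rich}}$. The plan is to close the cycle $\rho_<(F) \le \alpha_{\mathrm{avg}} \le \alpha_{\mathrm{rich}} \le \rho_<(F)$, where the third inequality will be the main difficulty. The inequality $\alpha_{\mathrm{avg}} \le \alpha_{\mathrm{rich}}$ is by averaging: if every $(\alpha, C)$-rich graph contains $F$, then any $G$ on $\set{0,1}^d$ with $\tfrac{1}{d}\sum_\ell e_\ell(G)/\tau_{\ell,d} \ge \alpha + \eps$ must have at least $\eps d / (1-\alpha)$ levels $\ell$ on which $e_\ell(G)/\tau_{\ell,d} \ge \alpha$ (since otherwise the complementary levels would drag the average below $\alpha + \eps$); for $d$ large this exceeds $C$, so $G$ is $(\alpha, C)$-rich and contains $F$.

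For $\rho_<(F) \le \alpha_{\mathrm{avg}}$ I use $R(m,d)$ as the host. Given $H \subseteq R(m,d)$ with $e(H) \ge (\alpha + \eps)e(R(m,d))$, \cref{prop:Rmd-richness} yields $\tfrac{1}{d}\sum_\ell e_\ell(H)/(2^{d-1}m^2) \ge \alpha + \eps/2$. I apply \cref{lemma:regularity} with the block partition $\set{B_x}$ as the initial equipartition, refining each $B_x$ into $\le L$ parts $V_{x,1},\dots,V_{x,L}$. Define a random ordered graph $G'$ on $\set{0,1}^d$ by choosing representative indices $k(x) \in [L]$ uniformly at random and declaring $xy \in E(G')$ exactly when $(V_{x,k(x)}, V_{y,k(y)})$ is $\eta$-regular with density at least $\lambda \cdot 2^{\delta(x,y)-d}$; the level-scaled threshold matches the typical density of $R(m,d)$ at level $\delta(x,y)$, via \cref{prop:Rmd-properties}. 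A careful expectation accounting, using \cref{prop:Rmd-properties} to bound contributions of irregular and low-density pairs and absorbing the irregularity loss of order $\eta \cdot 2^d/d$ by taking $\eta$ small in terms of $d$ and $\eps$, gives $\bE\bigl[\tfrac{1}{d}\sum_\ell e_\ell(G')/\tau_{\ell,d}\bigr] \ge \alpha + \eps/4$. A realization of $G'$ attaining this expectation contains $F$ by the hypothesis on $\alpha_{\mathrm{avg}}$ (for $d$ beyond its threshold), and the copy lifts back to $H$ via \cref{lemma:graph_building}: at fixed $d$ the density thresholds $\lambda \cdot 2^{\delta-d}$ are bounded below by $\lambda \cdot 2^{1-d}$, a positive constant, so the lemma applies.

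For the hard direction $\alpha_{\mathrm{rich}} \le \rho_<(F)$ I argue by contraposition. Assume that for every $C$ there exists an $(\alpha, C)$-rich $F$-free graph $G_C$ on $\set{0,1}^{D_C}$, and aim to show that every host $H$ admits an $F$-free subgraph of density $\ge \alpha - o(1)$, forcing $\rho_<(F) \ge \alpha$. Apply \cref{lemma:regularity} to $H$ to produce an equipartition into $2^{D_C}$ parts (after rounding), identify the cluster graph with an ordered graph on $\set{0,1}^{D_C}$, and form $H' \subseteq H$ consisting of those $H$-edges between parts $(V_i, V_j)$ for which $ij \in E(G_C)$. A \cref{prop:Rmd-richness}-style count should give $e(H')/e(H) \ge \alpha - o(1)$, since the block-indexed blow-up preserves the average level density of $G_C$. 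The only $F$-copies in $H'$ arise from non-injective ordered homomorphisms $F \to G_C$ (distinct $F$-vertices mapped to the same part $V_i$); there are at most $O(|V(G_C)|^{|V(F)|-1}(n/|V_i|)^{|V(F)|-1})$ such copies, so removing one edge per copy costs a vanishing fraction of $e(H')$ once $|V_i|$ is sufficiently large. The main obstacle lies precisely here: against an arbitrary host $H$ whose cluster graph need not resemble $R(m,d)$, the blow-up density calculation requires choosing the refinement so that the density of $H'$ genuinely reflects the average level density of $G_C$, together with uniform control of the non-injective homomorphism count; the tools gathered in \cref{sec:preliminaries} --- \cref{prop:Rmd-properties,prop:Rmd-richness}, the refined regularity lemma, and the graph building lemma --- should ultimately suffice, but this alignment is the delicate part.
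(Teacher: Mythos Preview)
Your handling of $\alpha_{\mathrm{avg}} \le \alpha_{\mathrm{rich}}$ and of $\rho_<(F) \le \alpha_{\mathrm{avg}}$ is essentially the paper's: the averaging step is identical, and your regularity-on-$R(m,d)$ argument is \cref{lemma:reduction}. Those parts are fine.

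The third step, $\alpha_{\mathrm{rich}} \le \rho_<(F)$, does not go through as written, and the gap is not merely technical. Your plan is to take an $(\alpha,C)$-rich $F$-free graph $G_C$ and use it as a template inside an arbitrary host $H$: partition $V(H)$ into $2^{D_C}$ ordered intervals and keep only edges between parts indexed by $E(G_C)$. But $(\alpha,C)$-richness says nothing about the \emph{global} edge density of $G_C$; it only says that on $C$ levels, the level-normalised density is at least $\alpha$. Take $H = K_n$ and let $G_C$ have its $C$ rich levels among the \emph{highest} levels $\ell = D_C - C + 1,\dots,D_C$ and no other edges. Then $e(G_C) \le \sum_{\ell > D_C - C} \tau_{\ell,D_C} \le 2^{D_C + C}$, while for the equipartition of $K_n$ every pair of parts carries the same number of edges, so $e(H')/e(H) \approx e(G_C)/\binom{2^{D_C}}{2} \le 2^{C+1-D_C} \to 0$. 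No choice of refinement or regularity parameters rescues this: the failure is that richness is measured with the level-weighting of $R(m,d)$, not with the uniform weighting of $K_n$, and an arbitrary host $H$ need not respect that weighting. The tools you cite from \cref{sec:preliminaries} are all calibrated to $R(m,d)$ and do not transfer.

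The paper closes this direction in the opposite way. Rather than embedding an $F$-free rich graph into an arbitrary host, it fixes a single hard host $H$ with $\rho_<(F,H)$ close to $\rho_<(F)$ and constructs a random \emph{order-preserving} embedding $\Phi$ of $H$ into the complete graph on $\{0,1\}^d$ with the property that, for almost every edge $xy$ on almost every rich level $\ell$, one has $\bP(xy \in \Phi(H)) \gtrsim e(H)/(\abs{\cL}\,\tau_{\ell,d})$. This is the tiling lemma, \cref{lemma:uniform-tiling}, and it is the main new ingredient of the section; it is not among the preliminaries and requires a careful two-stage random choice of levels to avoid edge biases. Given such a $\Phi$, for any $(\rho_<(F)+\eps,C)$-rich $G$ one computes $\bE[e(G\cap\Phi(H))] \ge (\rho_<(F)+\eps/4)\,e(H)$, so some realisation of $G\cap\Phi(H)$, viewed as a subgraph of $H$, contains $F$, and hence so does $G$. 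Your proposal is missing exactly this lemma, and its role cannot be filled by the blow-up argument you sketch.
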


In \cref{subsec:uniform-tiling} we prove \cref{lemma:uniform-tiling} which gives lower bounds for $\rord(F)$ while in \cref{subsec:richness-reduction} we prove \cref{lemma:reduction} which gives upper bounds for $\rord(F)$. Finally, in \cref{subsec:finish} we put them together to prove \cref{thm:universal,thm:rho-rich-equivalence,thm:rho-average-rich}.

\subsection{Random embeddings}\label{subsec:uniform-tiling}

In this subsection we prove \cref{lemma:uniform-tiling}, about random embeddings of graphs into $R(m,d)$, which we will use to prove the lower bound on $\rord(F)$ in \cref{thm:universal}.
Before delving into the details of the somewhat technical proof, we first describe our general strategy and motivate certain choices that might otherwise seem arbitrary. 

Consider the following proof that for every \emph{unordered} graph $F$, $\rho(F) = \inf_n \rho(F, K_n)$. It suffices to show that for every graph $H$, $\rho(F, K_n) \leq \rho(F, H)$ for sufficiently large $n$. For this purpose, let $\Phi$ be a uniformly random embedding of $H$ into $K_n$. Then, for each edge $e \in E(H)$, $\Phi(e)$ is a uniformly random edge of $K_n$. Therefore, for any subgraph $G$ of $K_n$ with more than $\rho(F,H)e(K_n)$ edges, the probability that $\Phi(e)\in E(G)$ is greater than $\rho(F,H)$. Hence, by Markov's inequality, there is an embedding $\phi$ of $H$ into $K_n$ such that $G'=G\cap \phi(H)$ contains more than $\rho(F,H)e(H)$ edges of $H$, which implies that $G'$, and hence $G$, contains $F$.

Note that this argument does not work for ordered graphs because for many a graph $H$, a uniformly random embedding of $H$ into $K_n$ will exhibit strong biases towards certain edges and against others. For example, if $H$ is a long ascending path, then random embeddings will be biased in favour of edges $xy$ for which $x$ and $y$ are fairly close in the ordering. What is more, because there exist graphs $F$ for which $\rord(F, R(m,d))$ is strictly less than $\rord(F, K_n)$, we have $R(m, d)$ as a specific example of an ordered graph for which it is impossible to find a distribution of the embeddings of it into $K_n$ without strongly favouring certain edges of $K_n$.

Thus, we endeavour instead to construct ``edge unbiased'' distributions on the embeddings of arbitrary ordered graphs into $R(m,d)$ although we will hide some of the complexity of this task by using the language of edge levels introduced in the previous section. The existence of such a distribution is the key lemma of this subsection.

\begin{lemma}[tiling]\label{lemma:uniform-tiling}
	For every ordered graph $H$ and all $\eps>0$, there exists $C\in \N$ such that, for all $d\in \N$ and subsets $\cL \subset [d]$ of size at least $C$, there exists a random variable $\Phi$ on the embeddings of $H$ into the complete ordered graph on $\set{0, 1}^d$ such that the following holds. For all but an $\eps$-proportion of $\ell \in \cL$, all but an $\eps$-proportion of $x, y \in \set{0, 1}^d$ with $\delta(x, y) = \ell$ satisfy
	\begin{equation*}
		\bP(xy\in \Phi(H)) \geq (1- \eps)\frac{e(H)}{\abs{\cL}\pairs_{\ell,d}}.
	\end{equation*}
\end{lemma}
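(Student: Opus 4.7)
My plan is a two-stage random construction of $\Phi$ followed by a direct-counting-plus-Chernoff analysis. Let $n = |V(H)|$ and take $C$ large enough in terms of $n$ and $\eps$, so that $|\cL| \geq C$ forces all subsequent counting and concentration estimates to go through. In the first stage I sample a ``$\delta$-profile'' $\vec{m} = (m_1, \ldots, m_{n-1})$ of distinct elements of $\cL$, intended so that $\delta(\Phi(v_i), \Phi(v_{i+1})) = m_i$, and hence $\delta(\Phi(v_i), \Phi(v_j)) = \min_{i \leq k < j} m_k$ for all $i < j$. In the second stage, given $\vec{m}$, every bit of the embedding not forced by the resulting $H$-tree structure is chosen independently and uniformly at random; since the $m_k$ are distinct, a valid embedding always exists.

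For each edge $e = v_i v_j \in E(H)$ and each pair $(x, y)$ with $\delta(x, y) = \ell$, I would compute $\bP[\Phi(e) = xy]$ by summing over profiles:
\[
\bP[\Phi(e) = xy] = \sum_{\vec{m} : \min_{i \leq k < j} m_k = \ell} \bP[\vec{m}] \cdot \bP[\Phi(v_i) = x, \Phi(v_j) = y \mid \vec{m}].
\]
A direct free-bit count shows the conditional factor equals a clean $c_{\vec{m}}/\pairs_{\ell, d}$ times the indicator that the bits of $(x, y)$ at the positions $m_k$ for $k \notin [i, j)$ match the values the $H$-tree forces. Summing across edges of $H$ expresses $\bP[xy \in \Phi(H)]$ as a normalised ``compatibility count''. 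The compatibility condition depends only on the bits of $x, y$ at positions in $\cL \setminus \set{\ell}$, and for typical pairs these bits behave like independent fair coins; a Chernoff argument then gives concentration of the count around its mean for all but an $\eps$-proportion of pairs at each level.

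The main obstacle is making the expected count match the target $e(H)/|\cL|$ simultaneously across $\cL$: under a uniformly random injective profile, $\bP[\min_{i \leq k < j} m_k = \ell]$ is skewed toward low ranks of $\cL$ whenever the window $[i, j)$ has size greater than $1$, so a constant fraction of $\cL$ fails the bound for any edge of $H$ of span greater than $1$. My plan to repair this is to parameterise the embedding directly by edges of $H$ rather than by consecutive $\delta$-values: pick an intended level $L_e \in \cL$ for each $e \in E(H)$ approximately independently and uniformly, then realise these $L_e$'s by an embedding respecting the ultrametric compatibilities forced by shared endpoints, discarding the small-probability event of inconsistent samples. Orchestrating this construction so that every edge's marginal level is near-uniform on $\cL$ while the second-stage bit choices still deliver the free-bit count above is the technical heart of the lemma.
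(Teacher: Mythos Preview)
Your two-stage framework (sample a $\delta$-profile $\vec m$, then fill free bits uniformly) and your diagnosis of the main obstacle both match the paper, but your proposed repair has a genuine gap. Parameterising by edges and sampling $L_e\in\cL$ for each $e\in E(H)$ ``approximately independently'' cannot work: the ultrametric forces $\delta(\Phi(v_i),\Phi(v_j))=\min_{i\le k<j}m_k$ for \emph{every} pair, so the $L_e$'s are rigid functions of the single vector $\vec m$. If $H$ contains a triangle on $v_1<v_2<v_3$ then $L_{13}=\min(L_{12},L_{23})$ deterministically, so independent uniform samples are inconsistent with probability $1-O(1/|\cL|)$, not ``small probability'', and conditioning on consistency wrecks the marginals you wanted. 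There is a second, subtler failure your sketch does not address: even after arranging $\min_{i\le k<j}m_k=\ell$ for one edge $v_iv_j$, each remaining $m_k$ with $k\in[i,j)$ pins a bit $y_{m_k}=1$ of $y=\Phi(v_j)$; if those positions are chosen in any clustered or deterministic way (e.g.\ the next $j-i-1$ elements of $\cL$ above $\ell$), only a $2^{-(j-i-1)}$-fraction of pairs $(x,y)$ at level $\ell$ are ever in the support. Your Chernoff step over the bits of $(x,y)$ cannot fix this, because whether it succeeds depends entirely on how the first stage spreads the forced positions, and that is exactly the construction you have left unspecified.

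The device the paper uses, and which is missing from your plan, is this: instead of drawing the profile from all of $\cL$, first place a uniformly random short window $W$ of $w$ consecutive elements of $\cL$ with $h\ll w\ll|\cL|$, and then draw the profile uniformly from $\binom{W}{h}$. Because $w\ll|\cL|$, the window's location---and hence every $m_k$ and every minimum of a subset of them---is near-uniform on $\cL$, which solves the skew problem for all edges of $H$ at once. Because $w\gg h$, the forced bit-positions form a sparse random subset of $W$, and a balanced-bits lemma (\cref{lemma:locally-balanced} in the paper) shows that all but an $\eps$-fraction of $y\in\{0,1\}^d$ have roughly half ones on every moderate sub-interval of $\cL$-indexed coordinates, which is precisely what makes the forced bits compatible with typical $y$.
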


To give an idea of the proof of \cref{lemma:uniform-tiling}, let $H$ be an arbitrary ordered graph, $K$ the complete graph on $\set{0, 1}^d$, and $\cL\subset[d]$ large. We want to construct a random variable $\Phi$ on the embeddings $\phi\colon H \hookrightarrow K$ such that for each $uv\in E(H)$, the level $\ell\coloneq \delta(\phi(u),\phi(v))$ is approximately uniformly distributed on $\cL$, and conditioned on $\ell$, the edge $\phi(uv)$ is approximately\footnote{Note that it is not possible to achieve an exactly uniform distribution for either the level or the edge within the level. For example, only the lexicographically smallest edge of $H$ can be mapped to the lexicographically smallest edge of $K$.} uniformly distributed on $\tau_{\ell,d}$.

The first condition shows that our distribution cannot be the uniform distribution over all embeddings of $H$ into $K$ as this would favour the lower levels with more edges. Instead, labelling the vertices of $H$ as $u_1<\dots<u_h$, we first choose a subset of levels $\cL'\coloneq \set{\ell_1< \dots < \ell_{h-1}}\in \cL$ and subsequently choose $\phi(u_1) <\dots < \phi(u_h)$ uniformly at random under the constraint $\delta(\phi(u_i), \phi(u_j)) = \ell_i$ for $i < j$.

Choosing the set of levels $\cL'$ still requires care. For example, if $\cL'$ is a uniformly random subset of $\cL$ of size $h - 1$, then $\ell_1$ and the distribution of edges involving $u_1$ exhibit strong biases towards lower levels. More subtly, it is also not valid to choose $\ell_1$ uniformly at random and the remaining $\ell_i$ to be the $h-2$ next higher levels of $\cL$. Indeed, if $h$ is large and $u_1 u_h$ is the only edge of $H$, then, for every supported embedding $\phi$, the image $\phi(E(H))$ can contain only those edges $xy$ at level $\ell_1 \in \cL$ for which $y_{\ell_2} = \dots = y_{\ell_{h-1}} = 1$ for the $h - 2$ indices $\ell_2, \dots, \ell_{h - 1}$ immediately following $\ell_1$ in $\cL$.

Our resolution to these difficulties is to sample $\cL'$ by choosing first an ``interval'' of consecutive levels of $\cL$ uniformly at random, and then choosing $\cL'$ uniformly at random from the $(h-1)$-sets contained in the interval. The interval will be short compared to $\cL$ so that each $\ell_i$ is roughly uniformly distributed in $\cL$, but long compared to $h$ so that we do not induce strong biases on the indices of the vertices incident to embedded edges. 

For a rigorous proof of \cref{lemma:uniform-tiling}, we need the following three quantitative lemmas whose proofs are fairly straightforward and can be found in \cref{appendix:aux}.

\begin{lemma}\label{lemma:binomial-fraction}
	For all $\edged\in (0,1]$, $\eps>0$, and $k\in \N$, there exist $\eta>0$ and $N\in \N$ such that for all $n\geq N$,
	\begin{equation*}
		\binom{\lfloor (\alpha-\eta) n\rfloor}{k} \geq (\alpha^k-\eps)\binom{n}{k}.
	\end{equation*}
\end{lemma}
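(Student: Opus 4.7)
The lemma is essentially the quantitative statement that $\binom{m}{k}/\binom{n}{k} \to (m/n)^k$ as $n \to \infty$, together with continuity of $x \mapsto x^k$ at $x = \alpha$. I would prove it by a direct elementary estimate, with care only in the order in which the parameters are chosen.

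First I would fix $\eta > 0$ small enough that $(\alpha - \eta)^k \geq \alpha^k - \epsilon/2$; this is possible by continuity of $x \mapsto x^k$ at $x = \alpha$ (and one may further shrink $\eta$ to ensure $\alpha - \eta > 0$). Writing $m = \lfloor (\alpha - \eta) n \rfloor$, so that $m \geq (\alpha - \eta)n - 1$, I would then expand
\begin{equation*}
\frac{\binom{m}{k}}{\binom{n}{k}} = \prod_{i = 0}^{k - 1} \frac{m - i}{n - i}
\end{equation*}
and bound each factor individually. For each fixed $i \in \{0, 1, \dots, k - 1\}$,
\begin{equation*}
\frac{m - i}{n - i} \geq \frac{(\alpha - \eta)n - 1 - i}{n - i} \xrightarrow{n \to \infty} \alpha - \eta,
\end{equation*}
so there exists $N_i$ such that for $n \geq N_i$ the factor is at least $\alpha - \eta - \eta'$, where $\eta' > 0$ is an auxiliary parameter.

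Next, by continuity, I would choose $\eta'$ small enough (depending only on $\alpha$, $k$, and $\epsilon$) that $(\alpha - \eta - \eta')^k \geq (\alpha - \eta)^k - \epsilon/2$. Setting $N = \max_i N_i$, for every $n \geq N$ all $k$ factors are simultaneously at least $\alpha - \eta - \eta'$, and multiplying gives
\begin{equation*}
\frac{\binom{m}{k}}{\binom{n}{k}} \geq (\alpha - \eta - \eta')^k \geq (\alpha - \eta)^k - \epsilon/2 \geq \alpha^k - \epsilon,
\end{equation*}
which rearranges to the desired inequality. There is no real obstacle here; the only subtlety is the quantifier order (fix $\eta$ first via continuity, then $\eta'$, then $N$), and making sure the additive $-1$ from the floor is absorbed into $\eta'$ for large $n$.
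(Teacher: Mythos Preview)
Your proof is correct and follows essentially the same route as the paper's: both expand the ratio $\binom{m}{k}/\binom{n}{k}$ (the paper via the asymptotic $\binom{m}{k}=(1+O_k(1/m))m^k/k!$, you via the product $\prod_i (m-i)/(n-i)$), observe it is close to $(m/n)^k\approx(\alpha-\eta)^k$, and then use a first-order/continuity bound to conclude. The only cosmetic difference is that the paper makes the explicit choice $\eta=\eps/(2k)$ (exploiting $(\alpha-\eta)^k\ge\alpha^k-k\eta$) whereas you invoke continuity with an auxiliary $\eta'$; just be sure, as you note in your final sentence, to fix $\eta'$ before the $N_i$, since the $N_i$ depend on it.
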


\begin{lemma}\label{lemma:locally-balanced}
	For all $\eps>0$, there exists $N \in \N$ such that for all $n>N$ and for all but an $\eps$-proportion of $x \in \set{0, 1}^n$, every interval $J\subset [n]$ of length at least $\ln^2 n$ satisfies
	\begin{equation*}
		\abs[\bigg]{\sum_{i\in J} x_i - \tfrac{\abs{J}}{2}}<\eps \abs{J}.
	\end{equation*}
\end{lemma}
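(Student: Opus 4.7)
The plan is a direct probabilistic argument: pick $x \in \set{0,1}^n$ uniformly at random, control each interval with Hoeffding's inequality, and then union bound over the (at most $n^2$) intervals. The threshold $\ln^2 n$ is chosen precisely so that the Chernoff--Hoeffding tail beats this polynomial union bound.

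Concretely, fix $\eps > 0$. For $x \in \set{0,1}^n$ chosen uniformly and a fixed interval $J \subset [n]$ of length $k$, the random variables $x_i$ for $i \in J$ are i.i.d.\ $\set{0,1}$-valued with mean $1/2$, so Hoeffding's inequality gives
\begin{equation*}
    \bP\Bigl(\abs[\big]{\textstyle\sum_{i \in J} x_i - k/2} \geq \eps k\Bigr) \leq 2 \exp(-2\eps^2 k).
\end{equation*}
For $k \geq \ln^2 n$ this is at most $2 \exp(-2\eps^2 \ln^2 n) = 2 n^{-2 \eps^2 \ln n}$, which decays faster than any polynomial in $n$.

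Next I would union bound. The total number of intervals $J \subseteq [n]$ is at most $\binom{n+1}{2} \leq n^2$, so the probability that \emph{some} interval of length at least $\ln^2 n$ fails the bound is at most $2 n^2 \cdot n^{-2 \eps^2 \ln n}$, which tends to $0$ as $n \to \infty$. Choosing $N$ so that this quantity is below $\eps$ for all $n > N$ (which is possible since $2\eps^2 \ln n \to \infty$) gives that all but an $\eps$-proportion of $x \in \set{0,1}^n$ satisfy the locally-balanced condition simultaneously for every interval of length at least $\ln^2 n$.

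There is no real obstacle here: the argument is essentially a textbook Hoeffding-plus-union-bound calculation. The only thing to note is the choice of the window length $\ln^2 n$: any function growing faster than $\ln n$ would suffice (so that $k$ dominates $\log n$ in the Chernoff exponent), and $\ln^2 n$ gives a clean super-polynomial decay that comfortably absorbs the $n^2$ union bound.
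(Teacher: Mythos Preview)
Your proposal is correct and is essentially identical to the paper's proof: both sample $x$ uniformly, apply a Chernoff/Hoeffding tail bound to each interval of length at least $\ln^2 n$, and union bound over the at most $n^2$ intervals. The only cosmetic difference is that the paper states the tail as $2\exp(-2\eps^2 \ln^2 n /3)$ via the multiplicative Chernoff form, whereas you get the slightly sharper $2\exp(-2\eps^2 \ln^2 n)$ from Hoeffding; either suffices.
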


\begin{lemma}\label{lemma:binomial-average}
	For all $\alpha, \eps > 0$, and $k\in \N$, there exist $\eta>0$ and $N\in \N$ such that for all $n > N$ the following holds. Suppose that $f\colon [n]\rightarrow [0,1]$ has the property that for all subintervals $J\subset [n]$ of length at least $\floor{\eta n}$, 
	\begin{equation}\label{eq:binomial-average-premise}
		\abs{\bE_{t\in J} f(t)-\alpha}\leq \eta.
	\end{equation}
	Then, for all $x,y\in [k]$,
	\begin{equation*}
		\sum_{t=\ceil{\eta n}}^{\floor{(1-\eta)n}} f(t) \binom{t-1}{x}\binom{n-t}{y}\geq (1-\eps)\alpha \binom{n}{x+y+1}.
	\end{equation*}
\end{lemma}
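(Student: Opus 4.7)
The plan is to lower bound the sum $S \coloneqq \sum_{t=A}^{B} f(t) w(t)$, where $A = \ceil{\eta n}$, $B = \floor{(1-\eta)n}$, and $w(t) \coloneqq \binom{t-1}{x}\binom{n-t}{y}$, relying on two structural facts about $w$: the Vandermonde identity $\sum_{t=1}^{n} w(t) = \binom{n}{x+y+1}$ (counting $(x+y+1)$-subsets of $[n]$ by their $(x+1)$st element), and the unimodality of $w$, with mode $t^\star = \ceil{xn/(x+y)}$. Since $x, y \geq 1$, the contribution from outside $[A, B]$ is negligible: a crude estimate gives $\sum_{t < A} w(t) \leq A \binom{A}{x}\binom{n}{y} = O_k(\eta^{x+1})\binom{n}{x+y+1}$ and symmetrically on the right, so it is enough to prove that $S \geq (1-\eps/2)\alpha \sum_{t=A}^{B} w(t)$.

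For the main estimate, I partition $[A, B]$ into consecutive intervals $J_1, \dots, J_K$, each of length between $M \coloneqq \lfloor \eta n \rfloor$ and $2M$ (absorb any leftover tail into $J_K$). The premise applies on each $J_k$, so $\sum_{t \in J_k} f(t) \geq (\alpha - \eta)|J_k|$, and using $f \geq 0$ together with $w(t) \geq w_k^{-} \coloneqq \min_{t \in J_k} w(t)$, this gives
\begin{equation*}
    S \;\geq\; (\alpha - \eta) \sum_{k=1}^{K} w_k^{-} |J_k| \;=\; (\alpha - \eta) \Bigl( \sum_{t=A}^{B} w(t) - \Omega \Bigr), \quad \Omega \coloneqq \sum_{k} \sum_{t \in J_k} \bigl(w(t) - w_k^{-}\bigr).
\end{equation*}
It then remains to show $\Omega \leq O_k(\eta)\binom{n}{x+y+1}$.

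This oscillation bound is the main obstacle: on intervals $J_k$ near the boundary, $w$ changes by a constant multiplicative factor, so any per-interval smoothness argument is too weak. The remedy is global telescoping enabled by unimodality. Let $k^\star$ be the interval containing $t^\star$. For $k < k^\star$, $w$ is increasing on $J_k = [a_k, b_k]$, so $w_k^{+} - w_k^{-} = w(b_k) - w(a_k) \leq w(a_{k+1}) - w(a_k)$ (by contiguity of the partition and monotonicity of $w$ across the join), and summing over $k < k^\star$ telescopes to at most $w^{\max}$. A symmetric argument handles $k > k^\star$, and the single peak interval $J_{k^\star}$ contributes at most $w^{\max}$, giving $\sum_k (w_k^{+} - w_k^{-}) \leq 3 w^{\max}$ and hence $\Omega \leq 2M \cdot 3 w^{\max}$. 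The Vandermonde-type bound $w^{\max} \leq \binom{n-1}{x+y} = \tfrac{x+y+1}{n}\binom{n}{x+y+1}$ finally converts this into $\Omega \leq O_k(\eta)\binom{n}{x+y+1}$. Combining with the boundary estimate and choosing $\eta$ small enough in terms of $\alpha, \eps, k$ delivers $S \geq (1-\eps)\alpha \binom{n}{x+y+1}$, as required.
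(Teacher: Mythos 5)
Your proof is correct, but it takes a genuinely different route from the paper's at the key technical step. The paper does \emph{not} work on the full range $[\ceil{\eta n}, \floor{(1-\eta)n}]$: it discards everything outside $(\ceil{\sqrt{\eta}\,n},\, n-\ceil{\sqrt{\eta}\,n}]$, partitions that smaller range into blocks of length $\floor{\eta n}$, and shows that on each block the weight $g(t)=\binom{t-1}{x}\binom{n-t}{y}$ is multiplicatively flat to within $1\pm\eps/2$ --- which is exactly why the $\sqrt{\eta}$-truncation is needed, since the per-step ratio deviates from $1$ by $O(1/t_0)$ and compounds over $\floor{\eta n}$ steps to $O(\eta n/t_0)$; this is only $o(1)$ when $t_0\gg\eta n$. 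It then recovers most of the Vandermonde sum on the truncated range via \cref{lemma:binomial-fraction}. You instead keep the full range and replace per-block multiplicative flatness by a \emph{global additive} oscillation bound: unimodality of $w$ lets the block oscillations $w_k^+-w_k^-$ telescope to $O(1)\cdot w^{\max}$, and $w^{\max}\leq\binom{n-1}{x+y}=\tfrac{x+y+1}{n}\binom{n}{x+y+1}$ converts this into an $O_k(\eta)$ relative error; the boundary mass is handled by a direct crude estimate rather than by \cref{lemma:binomial-fraction}. Both arguments are sound and of comparable length. Yours is somewhat more self-contained (no auxiliary counting lemma, no $\sqrt{\eta}$ trick) and more robust, in that it only uses unimodality of the weights together with $w^{\max}=O_k(1/n)\sum_t w(t)$ and small boundary mass, rather than the specific multiplicative structure of the binomial ratios; the paper's version has the minor advantage of reusing \cref{lemma:binomial-fraction}, which it needs elsewhere anyway. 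One small presentational point: since $f\ge 0$ and the weights are nonnegative, either proof only needs to lower-bound a sub-sum of the stated sum, so the discrepancy between your range $[A,B]$ and the paper's truncated range is immaterial.
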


\begin{proof}[Proof of \cref{lemma:uniform-tiling}.]
	Let $\eps>0$ and let $H$ be an ordered graph. Defining $h \coloneqq \abs{V(H)}$, we may assume without loss of generality that $V(H) = [h]$ with the usual ordering.
	Let us now determine $C$. First, we apply \cref{lemma:binomial-fraction} with $\alpha_{\ref{lemma:binomial-fraction}}=1/2$, $\eps_{\ref{lemma:binomial-fraction}}=\eps/4$, and $k_{\ref{lemma:binomial-fraction}} = h$ to obtain $N_{\ref{lemma:binomial-fraction}}$ and $\eta_{\ref{lemma:binomial-fraction}}$. Independently, we apply \cref{lemma:binomial-average} with $\alpha_{\ref{lemma:binomial-average}}=1/2$, $\eps_{\ref{lemma:binomial-average}}=\eps/4$, and $k_{\ref{lemma:binomial-average}} = h$, to obtain 
	$N_{\ref{lemma:binomial-average}}$ and $\eta_{\ref{lemma:binomial-average}}$. Then, we let $\eta = \min(\eps/2, \eta_{\ref{lemma:binomial-average}}, \eta_{\ref{lemma:binomial-fraction}})$ and apply \cref{lemma:locally-balanced} with $\eps_{\ref{lemma:locally-balanced}} = \eta$ to obtain $N_{\ref{lemma:locally-balanced}}$.
	
	Finally, let $C$ be an integer that is larger than $\eta^{-6}\max(N_{\ref{lemma:binomial-average}}, \allowbreak N_{\ref{lemma:locally-balanced}}, N_{\ref{lemma:binomial-fraction}})$ and sufficiently large to satisfy $\ln^2 (C)<\eta^6 C-1$. 
	
	Given $d\in \N$ and $\cL\subset [d]$ of size $\nlays>C$, we let $w=\lceil \eta^2 \nlays\rceil$, $M=\lceil \eta^4 \nlays\rceil$, and $m=\lceil \eta^6 \nlays\rceil$ so that $w<\eta \nlays$, $M<\eta w$, and $m<\eta M$, while $m-1> \ln^2 \nlays$ and $m>\max(N_{\ref{lemma:binomial-average}}, N_{\ref{lemma:locally-balanced}}, N_{\ref{lemma:binomial-fraction}})$. Furthermore, we define $\iota \colon [\nlays] \to \cL$ as the enumeration of $\cL$ in order.
	
	We sample $\Phi$ as follows. First, choose an integer $a$ in $[0,\nlays-w)$ uniformly at random, and let $b=a+w$. Next, choose $\cL'\subset \iota((a,b])$ of size $h$ uniformly at random, and denote the elements of $\cL'$ in ascending order by $\ell_1, \dots, \ell_h$. Finally, the vertices $v_1<\dots< v_h$ are chosen uniformly at random under the restriction that $\delta(v_i, v_{i+1})=\ell_i$ for all $i\in [h - 1]$ and that $(v_h)_{\ell_h}$, i.e., the coordinate of $v_h$ with index $\ell_h$, is $0$. Such a choice of $v_1, \dots, v_h$ has the form displayed in \cref{fig:randomembedding} where the $A_i$ and $B_i$ are binary strings of the correct length: $|A_1| = \ell_1-1$; $|A_i| = \ell_i - \ell_{i-1}-1$ for each $i\in [2,h]$; $|B_i| = d-\ell_i$ for each $i\in [h]$.
	\begin{figure}[htp!]
		\centering
		\[
		\begin{array}{cccccccccccc}
			& & \ell_1 & & \ell_2 & & \ell_3 & \dotsm & \ell_{h - 1} & & \ell_h & \\[.5em] 
			v_1\colon & \boxed{\hspace{0.9em} A_1 \hspace{0.9em}} & 0 & \multicolumn{9}{c}{\boxed{\hspace{12.8em}B_1\hspace{12.8em}}} \\[.5em]
			v_2\colon & \boxed{\hspace{0.9em} A_1 \hspace{0.9em}} & 1 & \boxed{\hspace{0.9em} A_2 \hspace{0.9em}} & 0 & \multicolumn{7}{c}{\boxed{\hspace{9.75em}B_2\hspace{9.75em}}} \\[.5em]
			v_3\colon & \boxed{\hspace{0.9em} A_1 \hspace{0.9em}} & 1 & \boxed{\hspace{0.9em} A_2 \hspace{0.9em}} & 1 & \boxed{\hspace{0.9em} A_3 \hspace{0.9em}} & 0 & \multicolumn{5}{c}{\boxed{\hspace{6.7em}B_3\hspace{6.7em}}} \\[.5em]
			\vdots & \vdots & \vdots & \vdots & \vdots & \vdots & & & & & & \vdots \\[.5em]
			v_h\colon & \boxed{\hspace{0.9em} A_1 \hspace{0.9em}} & 1 & \boxed{\hspace{0.9em} A_2 \hspace{0.9em}} & 1 & \boxed{\hspace{0.9em} A_3 \hspace{0.9em}} & 1 & \hspace{0.5em}\dotsm & 1 & \boxed{\hspace{0.9em} A_h \hspace{0.9em}} & 0 & \boxed{\hspace{0.9em}B_h\hspace{0.9em}}
		\end{array}
		\]
		\caption{The random embedding $\Phi$}\label{fig:randomembedding}
	\end{figure}
	
	Given the $\ell_i$ there is a bijection between the set of tuples of vertices, $(v_1, \dots, v_h)$, satisfying the restriction, and the set of tuples of binary strings (of the correct lengths), $(A_1, \dots, A_h, B_1, \dots, B_h)$. Hence, given $\ell_1, \dots, \ell_h$, the random choice of the $v_i$ is equivalent to sampling each of $A_1, \dots, A_h$ and $B_1, \dots, B_h$ independently and uniformly at random (with the correct lengths).
	
	For the remainder of the proof, we fix $i<j$ in $[h]$. For $y \in \set{0, 1}^d$ and $\beta \in \set{0, 1}$, let 
	\begin{equation*}
		S_{y,\beta} = \set{\ell \in [d] \colon y_\ell = \beta}.
	\end{equation*}
	We say that $y$ is \defn{good}, if for every interval $J\subset [\nlays]$ of length at least $m-1$, and $\bin\in \set{0, 1}$, we have
	\begin{equation*}
		\abs{\iota(J)\cap S_{y,\bin}}\geq \frac{1-\eta}{2}\abs{J}.
	\end{equation*}
	
	Recall that $m >\ln^2 \nlays$ and $\nlays \geq N_{\ref{lemma:locally-balanced}}$. Thus, if we project elements of $\set{0, 1}^d$ onto their coordinates indexed by $\cL$, then \cref{lemma:locally-balanced} implies that all but $\eta 2^d$ vertices in $\set{0, 1}^d$ are good.
	Thus, to complete the proof it is enough to show that for all good $y$ and all $x<y$ such that $\delta(x,y)$ is in $\iota([w,\nlays-w])$, the probability of $v_i$ being mapped to $x$ and $v_j$ being mapped to $y$ is at least $\frac{1-\eta}{\nlays\pairs_{\lambda,d}}$. Fix $x$ and $y$ with these properties.
	
	Let $\fL$ be the set of $\set{\lambda_1<\dots< \lambda_h}\subset \cL$ such that $y_\lambda=1$ for all $\lambda \in \set{\lambda_1, \dots, \lambda_{j-1}}$, $\lambda_i = \delta(x,y)$, and $y_{\lambda_j}=0$. Note that the first two conditions then also imply that $x_\lambda=1$
	for all $\lambda \in \set{\lambda_1, \dots, \lambda_{i - 1}}$ and $x_{\lambda_i} = 0$. We have
	\begin{equation}\label{eq:tiling-prob-lambda-sum}
		\bP(v_i = x, v_j = y) = \sum_{\set{\lambda_1<\dots< \lambda_h}\in \fL}\bP(\ell_1=\lambda_1,\dots, \ell_h=\lambda_h, v_i = x, v_j = y).
	\end{equation}
	Happily, conditioning on the event that $\ell_1=\lambda_1,\dots, \ell_h=\lambda_h$ for fixed $\set{\lambda_1<\dots<\lambda_h}\in \fL$, it is easy to compute the probability that $v_i=x$ and $v_j=y$. Namely, sampling $v_1,\dots, v_h$ by their bitstrings $A_1, \dots, A_h$ and $B_1, \dots, B_h$ as illustrated in \cref{fig:randomembedding}, this happens if and only if
	
	\begin{itemize}
		\item $A_r$ coincides with the substring of $y$ on coordinates $[\lambda_{r-1}+1,\lambda_r-1]$ for all $r\in [j]$,
		\item $B_i$ coincides with the substring of $x$ on coordinates $[\lambda_i+1,d]$, and
		\item $B_j$ coincides with the substring of $y$ on coordinates $[\lambda_j+1,d]$.
	\end{itemize}
	
	Now, since the $A_t$ and $B_t$ are independently and uniformly random bit strings, the first event happens with probability $1/2^{\lambda_j-j}$, the second with probability $1/2^{d-\lambda_i-1}=1/2^{d-\delta(x,y)}$, and the third with probability $1/2^{d-\lambda_j}$. Thus, \eqref{eq:tiling-prob-lambda-sum} becomes
	\begin{align*}
		\bP(v_i = x, v_j = y) &= \sum_{\set{\lambda_1<\dots< \lambda_h}\in \fL}\bP(\ell_1=\lambda_1,\dots, \ell_h=\lambda_h)\frac{1}{2^{2d-\delta(x,y)-j}}\\
		&=\bP(\set{\ell_1 < \dots < \ell_h}\in \fL)\cdot \frac{2^{j-1}}{2^{2d-\delta(x,y)-1}}.
	\end{align*}
	Since $\pairs_{\delta(x,y), d}=2^{2d-\delta(x,y)-1}$, it remains to show that
	\begin{equation*}
		\bP(\set{\ell_1<\dots< \ell_h}\in \fL)\geq \frac{1 - \eps}{2^{j - 1}\nlays}.
	\end{equation*}
	To simplify our notation in this task, let $\cE$ be the event that $\set{\ell_1 < \dots < \ell_h} \in \fL$, let $\kappa = \iota\inv(\lambda)$, $S_0 = \iota\inv(S_{y,0})$, and $S_1 = \iota\inv(S_{y,1})$.
	For each $a\in [0,\nlays-w)$, there are $\binom{w}{h}$ possible choices for $\ell_1, \dots, \ell_h$. To count those that are in $\fL$, we count the number of sets $\set{\lambda_1, \dots, \lambda_h} \subset \iota((a, a + w])$ with $\lambda_j = \iota(r)$ for each $r\in (\kappa, a+w] \cap S_0$ separately. We must have $\lambda_i = \delta(x,y)$, while $\lambda_1, \dots, \lambda_{i - 1}$ can be chosen from $\iota(S_1 \cap (a, \kappa-1])$ arbitrarily, $\lambda_{i + 1}, \dots, \lambda_{j - 1}$ can be chosen from $\iota(S_1\cap (\kappa, r-1])$, and $\lambda_{j+1}, \dots, \lambda_h$ can be chosen from $\iota((r,a+w])$. Thus,
	\begin{align}\label{eq:uniform-tiling-probA-1}
		\bP(\cE) &= \frac{1}{(\nlays-w) \binom{w}{h}} \sum_{a=0}^{\nlays - w} \binom{\abs{S_1\cap (a, \kappa-1]}}{i-1} \nonumber\\
		& \qquad \qquad \sum_{r = \kappa + 1}^{a + w} 1_{S_0}(r) \binom{\abs{S_1\cap (\kappa, r-1]}}{j-i-1}\binom{a+w-r}{h-j}.
	\end{align}
	Since $y$ is good, whenever $\kappa \geq a + m$ and $r \geq \kappa +m$, we have both
	\begin{align*}
		\abs{S_1\cap (a, \kappa-1]} & \geq \frac{1-\eta}{2}(\kappa-1-a), \\   
		\abs{S_1\cap (\kappa, r-1]} & \geq \frac{1-\eta}{2}(r-1-\kappa).
	\end{align*}
	Inserting this into \eqref{eq:uniform-tiling-probA-1} for valid values of $a$ and $r$, we obtain
	\begin{align*}
		\bP(\cE) &\geq \frac{1}{(\nlays - w) \binom{w}{h}} \sum_{a = \kappa - w + M}^{\kappa - M} \binom{\ceil{\frac{1-\eta}{2}(\kappa-1-a)}}{i-1} \nonumber\\
		& \qquad \qquad \sum_{r=\kappa+m}^{a+w} 1_{S_0}(r) \binom{\ceil{\frac{1-\eta}{2} (r-1-\kappa)}}{j-i-1}\binom{a+w-r}{h - j}.
	\end{align*}
	Since $i$ and $j$ are at most $h$, and $m>N_{\ref{lemma:binomial-fraction}}$, we may apply \cref{lemma:binomial-fraction} to the above which yields
	\begin{align}\label{eq:uniform-tiling-probA-2}
		\bP(\cE) & \geq \frac{1-\eps/2}{2^{j - 2} (\nlays-w) \binom{w}{h}} \sum_{a=\kappa-w+M}^{\kappa - M} \binom{\kappa-1-a}{i-1} \nonumber\\
		& \qquad \qquad \sum_{r=\kappa+m}^{a+w} 1_{S_0}(r) \binom{r-1-\kappa}{j-i-1}\binom{a+w-r}{h-j}.
	\end{align}
	Shifting the index of the inner sum by $\kappa$, the inner sum becomes
	\begin{equation*}
		\sum_{t=m}^{a+w-\kappa} 1_{S_0}(t+\kappa) \binom{t-1}{j-i-1} \binom{a+w-\kappa-t}{h-j}.
	\end{equation*}
	However, since $y$ is good, $a+w-\kappa \geq M > N_{\ref{lemma:binomial-average}}$, and $m < \eta M$, we know from \cref{lemma:binomial-average} that the above is at least
	\begin{equation*}
		\frac{1-\eps/4}{2}\binom{a+w-\kappa}{h - i}.
	\end{equation*}
	Inserting this into \eqref{eq:uniform-tiling-probA-2}, we obtain
	\begin{equation} \label{eq:uniform-tiling-probA-3}
		\bP(\cE) \geq \frac{1-3\eps/4}{2^{j - 1} (\nlays-w) \binom{w}{h}} \sum_{a=\kappa-w+M}^{\kappa - M} \binom{\kappa-1-a}{i-1} \binom{a+w-\kappa}{h-i}.
	\end{equation}
	It only remains to show that the inner sum is approximately equal to $\binom{w}{h}$. For this purpose, we make an index transformation $a = \kappa - t$, which transforms the inner sum into
	\begin{equation*}
		\sum_{t=M}^{w-M} \binom{t-1}{i-1} \binom{w-t}{h-i}.
	\end{equation*}
	However, since $M<\eta w$, applying \cref{lemma:binomial-average} with $f\equiv 1$ shows that this is at least $(1-\eps/4) \binom{w}{h}$, so that \eqref{eq:uniform-tiling-probA-3} yields
	\begin{equation*}
		\bP(\cE)\geq \frac{1-\eps}{2^{j - 1} (\nlays-w)} \geq \frac{1-\eps}{2^{j - 1} \nlays},
	\end{equation*}
	as desired.
\end{proof}

\subsection{Reduction to graphs on \texorpdfstring{$\set{0, 1}^d$}{{0,1}d}}\label{subsec:richness-reduction}

We now prove \cref{lemma:reduction} which reduces obtaining upper bounds on $\rord(F)$ to finding $F$ in all graphs with sufficiently large `average richness'.
This will make use of the regularity lemma machinery from \cref{subsec:regularity}.

\begin{lemma}[reduction to $\set{0, 1}^d$]\label{lemma:reduction}
	Let $\edged \in [0, 1]$ and let $F$ be an ordered graph. Suppose that, for all $\eps > 0$ and all sufficiently large $d$ \textup{(}in terms of $\epsilon$ and $F$\textup{)}, every graph $G$ on $\set{0, 1}^d$ that satisfies
	\begin{equation*}
		\frac{1}{d} \sum_{\ell = 1}^d \frac{e_\ell(G)}{\pairs_{\ell, d}} \geq \edged + \eps
	\end{equation*}
	contains an ordered copy of $F$. Then $\lim_{d \to \infty} \lim_{m \to \infty} \rord(F, R(m, d)) \leq \edged$.
\end{lemma}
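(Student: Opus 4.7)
The plan is to pass from the dense subgraph $G \subseteq R(m, d)$ to a ``block graph'' $H$ on $\set{0, 1}^d$, apply the hypothesis to find $F \hookrightarrow H$, and then lift this macroscopic embedding back to an ordered copy of $F$ inside $G$ via the regularity and graph building lemmas.

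Fix a small $\lambda = \lambda(\eps, \edged) > 0$. Given $G \subseteq R(m, d)$ with $e(G) \geq (\edged + 2\eps) e(R(m, d))$, define $H$ on $\set{0, 1}^d$ by $xy \in E(H)$ iff $e_G(B_x, B_y) \geq \lambda \cdot 2^{-d + \delta(x, y)} m^2$. The upper density bound from \cref{prop:Rmd-properties}(ii) applied to $(B_x, B_y)$ gives
\[
  e_\ell(G) \leq (1 + \eps) \cdot 2^{-d + \ell} m^2 \cdot e_\ell(H) + \lambda \cdot 2^{-d + \ell} m^2 \cdot \pairs_{\ell, d}
\]
for every level $\ell$; dividing by $2^{d - 1} m^2$, averaging over $\ell$ and invoking \cref{prop:Rmd-richness} yields $\tfrac{1}{d} \sum_\ell e_\ell(H)/\pairs_{\ell, d} \geq \edged + \eps'$ for some $\eps' = \eps'(\eps, \edged) > 0$, provided $\lambda$ is chosen suitably small. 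The hypothesis of the lemma then produces, for $d$ sufficiently large in terms of $\eps'$ and $F$, an order-preserving embedding $\phi \colon V(F) \hookrightarrow \set{0, 1}^d$ of $F$ into $H$; in particular $e_G(B_{\phi(u)}, B_{\phi(v)}) \geq \lambda \cdot 2^{-d + \ell_{uv}} m^2$ for every $uv \in E(F)$, where $\ell_{uv} \coloneqq \delta(\phi(u), \phi(v))$.

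To lift $\phi$ to a copy of $F$ in $G$, apply \cref{lemma:regularity} to $G[\bigcup_v B_{\phi(v)}]$ with initial partition $\set{B_{\phi(v)}}$ and regularity parameter $\eta$ chosen small enough that (i) $\eta \leq \eta_0(\lambda_{\textup{bld}}, F)$, the threshold demanded by \cref{lemma:graph_building} at the level-dependent density $\lambda_{\textup{bld}} \coloneqq \tfrac{\lambda}{2} \cdot 2^{-d + \ell_{\min}}$ with $\ell_{\min} = \min_{uv \in E(F)} \ell_{uv}$, and (ii) $\eta \cdot \abs{V(F)}^2 \leq \lambda/6$. This refines each $B_{\phi(v)}$ into $L = L(\eta, \abs{V(F)})$ clusters $V^v_1, \dots, V^v_L$. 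For every $uv \in E(F)$ the density cap $(1+\eps) 2^{-d+\ell_{uv}}$ from \cref{prop:Rmd-properties}(ii), combined with the lower bound $\lambda \cdot 2^{-d + \ell_{uv}}$ on the average cluster-pair density between $B_{\phi(u)}$ and $B_{\phi(v)}$, shows by a short averaging argument that at least a $\lambda/6$ fraction of the $L^2$ cluster pairs between them are simultaneously $\eta$-regular and of density at least $\lambda_{\textup{bld}}$; call these the \emph{good} pairs for $uv$.

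The step I expect to be the main obstacle is to choose, consistently over every $uv \in E(F)$, cluster indices $(i_v)_{v \in V(F)}$ with $(V^{\phi(u)}_{i_u}, V^{\phi(v)}_{i_v})$ good for every $F$-edge. My plan is to form an auxiliary $\abs{V(F)}$-partite graph $M$ on $[L] \times V(F)$ whose bipartite block between $[L] \times \set{u}$ and $[L] \times \set{v}$, for $uv \in E(F)$, records exactly the good pairs; each such block has density at least $\lambda/6 > 0$. A standard multipartite supersaturation argument---for instance by applying \cref{lemma:regularity} and \cref{lemma:graph_building} a second time to $M$---then yields a transversal copy of $F$ in $M$, i.e.\ the required consistent choice $(i_v)_{v \in V(F)}$. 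The graph building lemma applied to $G$ with $V_v = V^{\phi(v)}_{i_v}$ produces the desired ordered copy of $F$ inside $G$, once $m$ is taken large enough in terms of $d$ and $\eps$ to ensure the clusters meet the size thresholds of \cref{prop:Rmd-properties}(ii) and \cref{lemma:graph_building}.
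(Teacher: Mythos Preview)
Your strategy is sound in outline but inverts the order of the two main steps relative to the paper, and this inversion creates a genuine gap at the transversal-selection stage.

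In the paper the regularity lemma is applied \emph{first}, to all of $R'$ with the blocks $\{B_x : x \in \{0,1\}^d\}$ as initial partition. After cleaning, the reduced graph lives on $\{0,1\}^d \times [m']$; selecting one cluster per block uniformly at random produces a graph on $\{0,1\}^d$ whose expected value of $\tfrac{1}{d}\sum_\ell e_\ell/\pairs_{\ell,d}$ is still at least $\edged+\eps$, so some sample satisfies the hypothesis of the lemma. Graph building then yields $F$ in $R'$ immediately, and no transversal problem ever arises.

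You instead define the block graph $H$ by a density threshold, apply the hypothesis to $H$ to obtain $\phi$, and only then regularise the $\abs{V(F)}$ selected blocks. This forces you to choose one cluster $i_v$ per vertex so that every $F$-edge lands on a good cluster pair. Your proposed ``second round of regularity and building on $M$'' requires the parts of $M$---that is, the number $m'$ of clusters per block---to be large. But \cref{lemma:regularity} as stated gives only an \emph{upper} bound $L$ on $m'$; nothing prevents $m'$ from being $1$ or $2$, and for small $m'$ a density of $\lambda/6$ on each bipartite block certainly does not force a transversal copy of $F$ (already for $F$ a triangle and $m'=2$ there are easy counterexamples). The fix is routine---invoke the standard form of the regularity lemma that also accepts a prescribed lower bound on the number of parts, chosen large in terms of $\abs{V(F)}$ and $\lambda$---but it must be stated, and the paper's approach sidesteps the issue entirely. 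A smaller point: your $\lambda_{\textup{bld}}$ depends on $\ell_{\min}$ and hence on the particular embedding $\phi$, so $\eta$ cannot be fixed before $\phi$ is found; simply take $\ell_{\min}=1$ in the definition to make $\lambda_{\textup{bld}}$, and hence $\eta$ and $L$, depend only on $d$.
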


\begin{proof}
	Let $\edged \in [0, 1]$ and $F$ be an ordered graph. Fix a small $\eps > 0$ and let $d$ be sufficiently large as in the hypothesis of the lemma.
	
	Let $m$ be sufficiently large in terms of $d$,  $\eps$, and $F$. Let $R\coloneqq R(m, d)$ as defined in \cref{prop:Rmd-properties}.
	Let $R' \subseteq R$ with $e(R') \geq (\edged + 14 \eps) \cdot e(R)$. We may and will assume that $V(R') = V(R)$. It suffices to prove that $F \subseteq R'$ as then
	\begin{equation*}
		\rord(F, R) \leq \edged + 14 \epsilon,
	\end{equation*}
	which implies $\lim_{d \to \infty} \lim_{m \to \infty} \rord(F, R(m, d)) \leq \edged$, since $\eps$ was arbitrary. For each level $\ell \in [d]$, we define $p_\ell = e_\ell(R')/(2^{d - 1} m^2)$. Now \cref{prop:Rmd-richness} guarantees that, provided $m$ is sufficiently large in terms of $d$ and $\eps$,
	\begin{equation*}
		\tfrac{1}{d} \sum_\ell p_\ell \geq \edged + 7 \eps.
	\end{equation*}
	Let $\eta$ be the parameter obtained from the graph building lemma, \cref{lemma:graph_building}, applied to $F$ with density $\epsilon 2^{-d}$. We may and will assume that $\eta \leq \epsilon 2^{-d}$. Note that the blocks $\cB \coloneqq \set{B_x \colon x \in \set{0, 1}^d}$ form an equipartition of $R'$ into $2^d$ parts. 
	Let $L\in \N$ obtained from the refined regularity lemma, \cref{lemma:regularity}, applied to $\eta$ and $2^d$ and let $\cP$ be the resulting $\eta$-regular equipartition of $R'$ that refines $\cB$. Note that $L$ is a function of $\eps$, $d$, and $F$ only. Provided $m$ is sufficiently large compared to $L$, the equipartition $\cP$ has split each block into an equal number of parts. We call this number $m'$ (and so $\abs{\cP} = 2^d m'$ and each part has size $\floor{m/m'}$ or $\ceil{m/m'}$) and note that $m' \leq L$.
	
	We clean $R'$ as follows:
	\begin{itemize}[noitemsep]
		\item we delete all edges between pairs of parts whose density is at most $\epsilon 2^{-d}$,
		\item we delete all edges between pairs of parts that are not $\eta$-regular.
	\end{itemize}
	The total number of deleted edges is at most
	\begin{align*}
		\epsilon 2^{-d} \abs{V(R')}^2 + \eta \tbinom{2^d m'}{2} \ceil[\Big]{\tfrac{\abs{V(R')}}{2^d m'}}^2 & \leq (\eta + \epsilon 2^{-d}) \abs{V(R')}^2 \\
		& \leq 2 \epsilon \cdot 2^{-d} (m 2^d)^2 = 4 \epsilon \cdot 2^{d - 1} m^2.
	\end{align*}
	Let $R''$ be the resulting graph. For each level $\ell$, we have
	\begin{equation*}
		\frac{e_\ell(R'')}{2^{d - 1} m^2} \geq \frac{e_\ell(R')}{2^{d - 1} m^2} - 4 \epsilon = p_\ell - 4 \epsilon.
	\end{equation*}
	Consider the graph $H$ whose vertex set is $\cP$ and where two parts are adjacent if they have an edge between them in $R''$ (in fact, due to the cleaning process, the pair of parts will be $\eta$-regular with density at least $\epsilon 2^{-d}$). We will order $H$ as follows: parts in different blocks are ordered in the same way as their blocks (so if $P \subset B_x$ and $Q \subset B_y$ where $x < y$, then $P < Q$) and parts within blocks are ordered arbitrarily.
	Since each block is split into $m'$ parts by $\cP$, we may view $H$ as having vertex set $\set{0, 1}^d \times [m']$ and talk about its level $\ell$-edges for every $\ell \in [d]$.
	
	Now, provided $m$ is sufficiently large compared to $L$, property (ii) of \cref{prop:Rmd-properties} implies that, between two parts $P, Q \in \cP$, the number of level $\ell$-edges in $R(m, d)$ is at most
	\begin{equation*}
		(1 + \epsilon) 2^{-d + \ell} \abs{P} \abs{Q} \leq (1 + \epsilon) 2^{-d + \ell} \ceil{m/m'}^2 \leq (1 + 2 \epsilon) 2^{-d + \ell} (m/m')^2.
	\end{equation*}
	Therefore, the number of level $\ell$-edges of $R''$ between any two parts is also at most $(1 + 2 \epsilon) 2^{-d + \ell} (m/m')^2$. This implies that, for every $\ell \in [d]$
	\begin{align*}
		e_\ell(H) & \geq \frac{e_\ell(R'')}{(1 + 2 \epsilon) 2^{-d + \ell} (m/m')^2} \\
		& = \frac{1}{1 + 2 \epsilon} \cdot \frac{e_\ell(R'')}{2^{d - 1} m^2} \cdot \pairs_{\ell, d} \cdot (m')^2 \\
		& \geq \frac{1}{1 + 2 \epsilon} \cdot (p_\ell - 4 \epsilon) \cdot \pairs_{\ell, d} \cdot (m')^2 \\
		& \geq (p_\ell - 6 \epsilon) \cdot \pairs_{\ell, d} \cdot (m')^2.
	\end{align*}
	For each $x \in \set{0, 1}^d$, we independently pick a random vertex $v_x$ from $\set{x} \times [m']$. Let $G = H[\set{v_x \colon x \in \set{0, 1}^d}]$ and note that $G$ can be viewed as an ordered graph on $\set{0, 1}^d$. Furthermore, each edge of $H$ is in $G$ with probability $1/(m')^2$ and so
	\begin{equation*}
		\bE\biggl[\tfrac{1}{d} \sum_\ell \frac{e_{\ell}(G)}{\pairs_{\ell, d}}\biggr] = \tfrac{1}{d} \sum_{\ell} \frac{e_{\ell}(H)}{\pairs_{\ell, d}  \cdot(m')^2} \geq \tfrac{1}{d} \sum_{\ell} (p_\ell - 6 \eps) \geq \edged + \eps.
	\end{equation*}
	Thus there is a choice of the $v_x$ for which $G$ satisfies $\tfrac{1}{d} \sum_\ell e_{\ell}(G)/\pairs_{\ell, d} \geq \edged + \eps$. 
	This $G$, by hypothesis, contains an ordered copy of $F$. This means that $H$ contains $F$ as an ordered subgraph with vertices in distinct blocks. Recall that the edges of $H$ correspond to pairs of parts in $\cP$ that are $\eta$-regular and have density at least $\eps 2^{-d}$ in $R'$. By our choice of $\eta$, \cref{lemma:graph_building} guarantees that $R'$ contains a copy of $F$. Due to the way we ordered $H$ and the fact that the vertices of $F$ are all in distinct blocks, this copy of $F$ in $R'$ is ordered in the correct way. Thus $R'$ contains $F$ as an ordered subgraph, as required.
\end{proof}

\subsection{Proof of \texorpdfstring{\cref{thm:universal,thm:rho-rich-equivalence,thm:rho-average-rich}}{main theorems}}\label{subsec:finish}

With the previous two sections in hand, we are ready to prove \cref{thm:universal,thm:rho-rich-equivalence,thm:rho-average-rich}.
First note that \cref{lemma:reduction} implies
\begin{align*}
	& \lim_{d \to \infty} \lim_{m \to \infty} \rord(F, R(m, d)) \\
	\leq \, &\inf\set[\Big]{\edged \in [0, 1] \colon \forall \textup{ large $d$ every $G$ on $\set{0, 1}^d$ with $\tfrac{1}{d} \sum_\ell \frac{e_\ell(G)}{\pairs_{\ell, d}} \geq \edged$ contains $F$}}.
\end{align*}
Now let $\edged_0 = \inf\set{\edged \in [0, 1] \colon \exists C \text{ such that every $(\edged, C)$-rich graph contains $F$}}$ and fix $\eps > 0$. There is a constant $C_0$ such that every $(\edged_0 + \eps/2, C_0)$-rich graph contains $F$. Suppose that $G$ is an ordered graph on $\set{0, 1}^d$ which satisfies $\frac{1}{d} \sum_\ell e_\ell(G)/\pairs_{\ell, d} \geq \edged_0 + \eps$. Then, since $e_\ell(G)/\pairs_{\ell, d} \in [0, 1]$, at least an $\eps/2$-proportion of $\ell \in [d]$ satisfy $e_\ell(G)/\pairs_{\ell, d} \geq \edged_0 + \eps/2$ and so $G$ is $(\edged_0 + \eps/2, \eps d/2)$-rich. Provided $d$ is sufficiently large, $\eps d/2 \geq C_0$ and so $G$ is $(\edged_0 + \eps/2, C_0)$-rich and hence contains $F$. Thus,
\begin{align*}
	& \inf\set[\Big]{\edged \in [0, 1] \colon \forall \textup{ large $d$ every $G$ on $\set{0, 1}^d$ with $\tfrac{1}{d} \sum_\ell \frac{e_\ell(G)}{\pairs_{\ell, d}} \geq \edged$ contains $F$}} \\
	\leq \, &\inf\set{\edged \in [0, 1] \colon \exists C \text{ such that every $(\edged, C)$-rich graph contains $F$}}.
\end{align*}
It remains to prove that 
\begin{equation}\label{eq:rho-rich}
	\inf\set{\edged \in [0, 1] \colon \exists C \text{ such that every $(\edged, C)$-rich graph contains $F$}} \leq \rord(F),
\end{equation}
since this would give the following chain of inequalities,
\begin{align*}
	\rord(F) & \leq \lim_{d \to \infty} \lim_{m \to \infty} \rord(F, R(m, d)) \\
	& \leq \inf\set[\Big]{\edged \in [0, 1] \colon \forall \textup{ large $d$ every $G$ on $\set{0, 1}^d$ with $\tfrac{1}{d} \sum_\ell \frac{e_\ell(G)}{\pairs_{\ell, d}} \geq \edged$ contains $F$}} \\
	& \leq \inf\set{\edged \in [0, 1] \colon \exists C \text{ such that every $(\edged, C)$-rich graph contains $F$}} \\
	& \leq \rord(F),
\end{align*}
where the first inequality follows from $\rord(F) = \inf_G \rord(F, G) \leq \rord(F, R(m, d))$ for all $m$ and $d$. Since $\rord(F)$ appears at both ends of the chain, there is equality throughout which gives the statements of \cref{thm:universal,thm:rho-rich-equivalence,thm:rho-average-rich}.

\begin{proof}[Proof of \eqref{eq:rho-rich}]
	Let $\eps > 0$. It suffices to prove that there exists $C$ such that every $(\rord(F) + \eps, C)$-rich graph contains $F$. By the definition of $\rord(F)$, there exists a graph $H$ such that all subgraphs $H'$ of $H$ with at least $(\rord(F) + \eps/4) \cdot e(H)$ edges contain an ordered copy of $F$. Let $C$ be the constant produced by \cref{lemma:uniform-tiling} for $H$ and $\eps/4$. Consider now an $(\rord(F) + \eps, C)$-rich graph $G$ on $\set{0, 1}^{d}$ for some positive integer $d$ and denote the set of rich levels of $G$ by $\cL$. It suffices to show that $G$ contains an ordered copy of $F$.
	
	Denote the complete graph on $V(G) = \set{0, 1}^d$ by $K$ and let $\Phi$ be a random embedding of $H$ into $K$ as given by \cref{lemma:uniform-tiling} applied to $H$, $\eps/4$, and $d$. Let $Z_\ell$ be the set of pairs $xy \in \set{0, 1}^d$ with $\delta(x, y) = \ell$ that satisfy $\bP(xy \in \Phi(H)) \geq (1 - \eps/4) e(H)/(\abs{\cL} \pairs_{\ell, d})$. Recall that $\pairs_{\ell, d}$ is the number of pairs $xy \in \set{0, 1}^d$ with $\delta(x, y) = \ell$.
	The conclusion of \cref{lemma:uniform-tiling} implies that the set $\cL'$ of $\ell \in \cL$ for which $\abs{Z_\ell} \geq (1 - \eps/4) \pairs_{\ell, d}$ has size at least $(1 - \eps/4)\abs{\cL}$.
	Now
	\begin{align*}
		\bE[e(G\cap \Phi(H))] & = \sum_{\ell = 1}^d \sum_{xy \in E_{\ell}(G)} \bP(xy \in \Phi(H))\\
		& \geq \sum_{\ell \in \cL'} \sum_{xy\in E_{\ell}(G)\cap Z_\ell} \frac{(1 - \eps/4) \cdot e(H)}{\abs{\cL}\pairs_{\ell,d}}\\
		& \geq \sum_{\ell \in \cL'} (e_{\ell}(G) - \eps \pairs_{\ell,d}/4) \frac{(1 - \eps/4) \cdot e(H)}{\abs{\cL} \pairs_{\ell,d}}.
	\end{align*}
	But every level $\ell \in \cL$ satisfies $e_\ell(G) \geq (\rord(F) + \eps)\pairs_{\ell, d}$ and so the above is at least
	\begin{equation*}
		\sum_{\ell\in \cL'} \frac{(\rord(F) + 3\eps/4)(1 - \eps/4) \cdot e(H)}{\abs{\cL}} \geq \sum_{\ell\in \cL'} \frac{(\rord(F) + \eps/2) \cdot e(H)}{\abs{\cL}},
	\end{equation*}
	which, as $\abs{\cL'} \geq (1 - \eps/4)\abs{\cL}$, is at least $(\rord(F) + \eps/4) \cdot e(H)$.
	Thus, there is an embedding of $H$ into $K$ such that $e(G \cap \Phi(H)) \geq (\rord(F) + \eps/4) \cdot e(H)$. Now $G \cap \Phi(H)$ can be viewed as a subgraph of $H$ and so, by the definition of $H$, it contains an ordered copy of $F$. Hence, $G$ contains an ordered copy of $F$, as desired.
\end{proof}

\section{Nothing: relative ordered \texorpdfstring{Tur\'{a}n}{Turán} at density zero}\label{sec:vanishing}

In this section we will prove \cref{thm:vanishing}, which we recall here for the reader's convenience. 

\vanishing*

It is useful to think of graphs $F$ without monotone paths of length two as analogous to bipartite graphs for classical Tur\'{a}n densities. Indeed, since no vertex of $F$ can have both neighbours smaller and larger than itself, we can partition $V(F)$ into two sets $A$ and $B$ which have no smaller and no larger neighbours, respectively. 
The family of such ordered graphs include all those with interval chromatic number two but is much larger (indeed it contains ordered graph of arbitrarily large interval chromatic number).

This structure implies that every ordered graph with no monotone path of length two embeds into some ordered graph $H_k$ ($k \in \bN$), which we define in \cref{subsec:Hk}. Hence, to prove \cref{thm:vanishing} it suffices to show that $\rord(H_k) = 0$ for all $k$. Gishboliner showed this when $k = 2$, see~\cite[Conclusion]{KLOPWX}. We prove it for all $k$ in \cref{subsec:vanishing-proof}.

\subsection{The graph \texorpdfstring{$H_k$}{Hk}}\label{subsec:Hk}

For a positive integer $k$, let \defn{$H_k$} be the ordered graph with vertex set $[k] \times \set{0,1}$ (ordered lexicographically) and edge set $\set{(x,0)(y,1) \colon x\leq y}$. See~\cref{fig:Hk}.

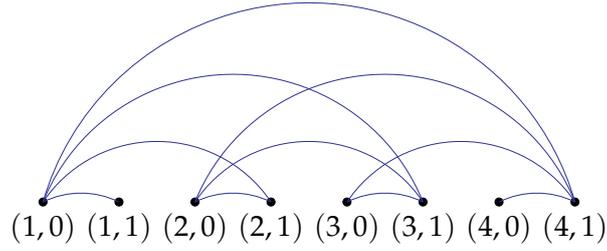
\begin{figure}[htp!]
	\centering
	\begin{tikzpicture}
		\pgfmathtruncatemacro\k{4}
		\foreach \i in {1, 2, ..., \k}{
			\foreach \j in {0,1}{
				\tkzDefPoint(2*\i+\j,0){p_{\i,\j}}
				\tkzDrawPoint(p_{\i,\j})
				\tkzLabelPoint[below](p_{\i,\j}){\small$(\i,\j)$}
			}
		}
		
		\tkzDefPoint(0,0){O1}
		\tkzDefPoint(0,-1){O2}
		
		\foreach \i in {1, 2, ..., \k}{
			\foreach \j in {\i, ..., \k}{
				\tkzDefMidPoint(p_{\i,0},p_{\j,1}) \tkzGetPoint{M_{\i,\j}}
				\tkzDefPointBy[translation= from O1 to O2](M_{\i,\j}) \tkzGetPoint{N_{\i,\j}}
				\tkzDrawArc[towards, Blue](N_{\i,\j},p_{\j,1})(p_{\i,0})
			}
		}
	\end{tikzpicture}
	\caption{The ordered graph $H_4$}\label{fig:Hk}
\end{figure}

It is clear that $H_k$ does not contain a monotone path of length two and that is has interval chromatic number $k + 1$. In particular, $\pord(H_k) = 1 - 1/k$ and so, subject to proving \cref{thm:vanishing}, the sequence $(H_k)_{k \in \bN}$ consists of ordered graphs with vanishing relative Tur\'{a}n density but Tur\'{a}n densities tending to one.

\begin{lemma}\label{lemma:Hk_sub}
	Every ordered graph $F$ on $k$ vertices that does not contain a monotone path of length two is an ordered subgraph of $H_k$.
\end{lemma}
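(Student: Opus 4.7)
The plan is to exploit the observation, already flagged in the discussion before \cref{lemma:Hk_sub}, that an ordered graph $F$ without a monotone path of length two splits naturally into ``sources'' and ``sinks''. Enumerate the vertices of $F$ in order as $v_1 < v_2 < \dots < v_k$ and define
\begin{equation*}
	A \coloneqq \set{v_i \colon v_i \text{ has no backward neighbour in } F}, \qquad B \coloneqq V(F) \setminus A.
\end{equation*}
The first step will be to check that this is a genuine bipartition whose edges are well behaved. By definition every vertex of $B$ has a backward neighbour, so if it also had a forward neighbour we would obtain a monotone $P_3$; hence every $v \in B$ has only backward neighbours. In particular every edge of $F$ runs between $A$ and $B$, from the smaller endpoint (in $A$) to the larger (in $B$).

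Next, I would write down the map $\varphi \colon V(F) \to V(H_k)$ given by
\begin{equation*}
	\varphi(v_i) = \begin{cases} (i, 0) & \text{if } v_i \in A, \\ (i, 1) & \text{if } v_i \in B. \end{cases}
\end{equation*}
Since $i \mapsto i$ is strictly increasing on $[k]$ and $(i, \epsilon_i) < (j, \epsilon_j)$ in $H_k$ whenever $i < j$, the map $\varphi$ preserves the vertex ordering. For an edge $v_i v_j \in E(F)$ with $i < j$ we have $v_i \in A$ and $v_j \in B$ by the previous paragraph, so $\varphi(v_i) \varphi(v_j) = (i, 0)(j, 1)$ and this is an edge of $H_k$ because $i \le j$. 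Thus $\varphi$ realises $F$ as an ordered subgraph of $H_k$.

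There is essentially no obstacle beyond getting the bipartition right: the only subtlety is the treatment of isolated vertices and of vertices with no forward neighbours but also no backward neighbours, which the definition of $A$ handles by placing every such vertex in $A$. Everything else is a one-line verification that $\varphi$ is order-preserving and edge-preserving. The proof is therefore short and elementary; its role in the paper is simply to reduce \cref{thm:vanishing} to the single family $(H_k)_{k \in \bN}$.
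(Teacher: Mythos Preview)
Your proof is correct and is essentially the same as the paper's: the paper encodes your bipartition via the function $\lord(v) \in \set{0,1}$ (the length of the longest increasing path ending at $v$), which satisfies $\lord(v) = 0$ precisely when $v$ has no backward neighbour, so your set $A$ is exactly $\set{v : \lord(v) = 0}$ and your map $\varphi$ coincides with the paper's $f(v_i) = (i, \lord(v_i))$.
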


\begin{proof}
	For each vertex $v$ of $F$, let \defn{$\lord(v)$} be the length of the longest increasing ordered path in $F$ that ends at $v$. Since $F$ does not contain a monotone path of length two, each vertex $v$ of $F$ satisfies $\lord(v) \in \set{0, 1}$.
	
	Let the vertices of $F$ be $v_1 < \dots < v_k$. Define the function $f \colon V(F) \to V(H_k)$ by
	\begin{equation*}
		f(v_i) = (i, \lord(v_i)),
	\end{equation*}
	for each $i$.
	We claim this is an ordered embedding of $F$ into $H_k$. Due to the lexicographic order on $V(H_k)$, the function $f$ is order-preserving. Next, let $v_i v_j$ ($i < j$) be an edge of $F$. We have $1 \geq \lord(v_j) \geq \lord(v_i) + 1 \geq 1$ and so $\lord(v_i) = 0$ and $\lord(v_j) = 1$. Therefore $f(v_i) = (i, 0)$ and $f(v_j) = (j, 1)$. By the definition of $H_k$, the vertices $f(v_i)$ and $f(v_j)$ are adjacent, as required.
\end{proof}

This immediately yields the following corollary.
\begin{corollary}
	\label{cor:density-Hk}
	Every ordered graph $F$ on $k$ vertices that does not contain a monotone path of length two satisfies $\rord(F) \le \rord(H_k)$.
\end{corollary}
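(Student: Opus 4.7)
The plan is to derive the corollary directly from \cref{lemma:Hk_sub} via the elementary monotonicity of $\rord$ with respect to ordered subgraph containment. Specifically, I will show that for any two ordered graphs $F$ and $H$ with $F$ an ordered subgraph of $H$, one has $\rord(F) \leq \rord(H)$, and then apply this with $H = H_k$.

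For the monotonicity step, fix any ordered host graph $G$ and any subgraph $G' \subseteq G$. Since $F$ is contained in $H$ as an ordered graph, every ordered copy of $H$ in $G'$ automatically carries an ordered copy of $F$; equivalently, if $G'$ is $F$-free then $G'$ is $H$-free. Consequently the family of $F$-free subgraphs of $G$ is contained in the family of $H$-free subgraphs of $G$, so taking the maximum edge count over the smaller family can only give a smaller value:
\begin{equation*}
	\rord(F, G) = \max\set[\bigg]{\frac{e(G')}{e(G)} \colon G' \subseteq G,\ F \nsubseteq G'} \leq \max\set[\bigg]{\frac{e(G')}{e(G)} \colon G' \subseteq G,\ H \nsubseteq G'} = \rord(H, G).
\end{equation*}
Taking the infimum over all ordered host graphs $G$ then yields $\rord(F) \leq \rord(H)$.

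By \cref{lemma:Hk_sub}, every ordered graph $F$ on $k$ vertices with no monotone path of length two embeds into $H_k$ as an ordered subgraph. Applying the monotonicity with $H = H_k$ gives $\rord(F) \leq \rord(H_k)$, as required. There is no genuine obstacle here: the statement is essentially just a packaging of \cref{lemma:Hk_sub} with the trivial observation that $\rord$ is monotone under ordered subgraph containment.
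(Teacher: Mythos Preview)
Your argument is correct and is exactly what the paper intends: the corollary is stated as an immediate consequence of \cref{lemma:Hk_sub}, and the implicit reasoning is precisely the monotonicity of $\rord$ under ordered subgraph containment that you have spelled out.
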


\Cref{cor:density-Hk} reduces \cref{thm:vanishing} to showing that $\rord(H_k) = 0$ for every $k$.

\subsection{Proof of \texorpdfstring{\cref{thm:vanishing}}{no monotone paths of length two}}\label{subsec:vanishing-proof}

The second part of the proof of \cref{thm:vanishing} is to show that $\rord(H_k) = 0$ for all $k$. Given a graph $G$ with many rich levels, we first set aside for each vertex all forward edges of the highest level in which the vertex has any forward edges.

Our next step, encapsulated in \cref{lemma:vanishing-aux}, is to find a vertex $x$ and a level $\ell$ such that, for the forward level $\ell$-neighbourhood $Y$ of $x$, there are many backward edges from $Y$ at levels lower than $\ell$. Note that $Y$ is contained in a fundamental interval $I$ at level $\ell+1$. What is more, the subgraph $G'$ of $G[I]$ in which we remove all edges of $G$ that are not backward edges from $Y$ has again many rich levels. Therefore, we will be able to find an embedding $\phi$ of $H_{k-1}$ into $G'$ by induction. 

Then, we use the forward edges of the highest level from $x$ in $G$, which we set aside earlier, to find a neighbour $y$ of $x$ that is smaller than every vertex in $I$. Finally, we extend $\phi$ to an embedding of $H_k$ by mapping $(1,0)$ to $x$ and mapping $(1,1)$ to $y$.

\begin{lemma}\label{lemma:vanishing-aux}
	For all $\eps > 0$, there exists $\eta > 0$ such that the following holds for all $C \geq 12/\eps$. Every $(\eps, C)$-rich graph $G$ contains a fundamental interval $I$, a vertex $x \in \lhs(I)$, and a subgraph $G'$ of $G[\rhs(I)]$ such that $G'$ is $(\eta, \eps C/24)$-rich and the larger vertex of every edge of $G'$ is in $N^+(x)$.
\end{lemma}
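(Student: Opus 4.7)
The plan is to take $\ell^\star \coloneqq \min \cL$, where $\cL$ is the set of $\eps$-rich levels of $G$, and to work with the partition $\cP_{\ell^\star - 1}$. Every level-$\ell^\star$ edge of $G$ is an edge between $\lhs(I)$ and $\rhs(I)$ for some $I \in \cP_{\ell^\star - 1}$, while every level-$\ell$ edge with $\ell > \ell^\star$ lies entirely inside one of the halves. The graph $G'$ we seek will be a subgraph of $G[\rhs(I^\star)]$ for a suitable $I^\star$, retaining only edges whose larger endpoint belongs to $Y_x \coloneqq N^+(x) \cap \rhs(I^\star)$ for a well-chosen $x \in \lhs(I^\star)$. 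Since $\rhs(I^\star)$ can be identified with $\set{0,1}^{d - \ell^\star}$ as an ordered set, a level-$\ell$ edge of $G$ lying in $\rhs(I^\star)$ becomes a level-$(\ell - \ell^\star)$ edge of $G'$, so the $(\eta, \eps C/24)$-richness of $G'$ translates to demanding that at least $\eps C / 24$ of the higher rich levels $\ell \in \cL$ contribute at least $\eta \cdot \pairs_{\ell - \ell^\star, d - \ell^\star}$ edges in $\rhs(I^\star)$ with larger endpoint in $Y_x$.

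To choose $I^\star$, I would combine two averaging arguments. Averaging the level-$\ell^\star$ edge count over $\cP_{\ell^\star - 1}$ shows that at least an $\eps/2$ fraction of intervals $I$ are \emph{$\ell^\star$-dense}, meaning $e_{\ell^\star}(I) \geq (\eps/2) \abs{\lhs(I)} \abs{\rhs(I)}$; this guarantees that a uniform random $x \in \lhs(I)$ has $\bE[\abs{Y_x}] \geq (\eps/2) \abs{\rhs(I)}$. For each higher rich level $\ell > \ell^\star$ in $\cL$, the edges split between $\lhs$-halves and $\rhs$-halves; after discarding at most half the levels, and handling the symmetric case where the $\lhs$ side carries more edges by a parallel mirror argument (which accounts for a factor $2$ in the denominator $24$), I may assume each kept level has at least half its edges inside $\rhs$-halves. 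A second averaging then yields, for each kept $\ell$, at least an $\eps/2$ fraction of intervals $I$ that are \emph{$\ell$-good}: the number of level-$\ell$ edges inside $\rhs(I)$ is at least $(\eps/2)\pairs_{\ell - \ell^\star, d - \ell^\star}$. A counting argument combined with $\abs{\cL} \geq C$ then produces some $I^\star$ that is both $\ell^\star$-dense and $\ell$-good for a positive constant fraction of the kept levels, yielding a set $\cL^\star$ of usable levels whose size is a constant multiple of $\eps C$.

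The main obstacle is the final step: given $I^\star$ and $\cL^\star$, choose $x \in \lhs(I^\star)$ so that $G'$ has the required richness. For uniform random $x$ and each $\ell \in \cL^\star$, the expected number of level-$\ell$ edges in $\rhs(I^\star)$ with larger endpoint in $Y_x$ equals $\abs{\lhs(I^\star)}^{-1} \sum_{v \in \rhs(I^\star)} d^-_{\ell^\star}(v) \, d^-_\ell(v)$, where $d^-_\ell(v)$ denotes the number of backward level-$\ell$ neighbours of $v$; an adversary can drive this sum to zero by putting these two backward-degree functions on disjoint supports. To defeat this, I would restrict to the \emph{robust} set $V^\star \coloneqq \set{v \in \rhs(I^\star) : d^-_{\ell^\star}(v) \geq (\eps/4) \abs{\lhs(I^\star)}}$, which by Markov captures at least half the level-$\ell^\star$ edges of $I^\star$, whence $\abs{V^\star} \geq (\eps/4) \abs{\rhs(I^\star)}$. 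A level $\ell \in \cL^\star$ is then declared \emph{surviving} if at least half of its edges inside $\rhs(I^\star)$ have larger endpoint in $V^\star$; discarding the rest costs at most another factor of two and leaves a constant multiple of $\eps C$ surviving levels. For each surviving $\ell$, any fixed $v \in V^\star$ lies in $Y_x$ with probability at least $\eps/4$, so the expected number of level-$\ell$ edges of $G'$ is at least a constant multiple of $\eps^2 \cdot \pairs_{\ell - \ell^\star, d - \ell^\star}$. A reverse Markov inequality per surviving level combined with linearity of expectation over these levels then produces some $x$ witnessing the $(\eta, \eps C/24)$-richness of $G'$, with $\eta$ a polynomial in $\eps$ depending only on $\eps$.
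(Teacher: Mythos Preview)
Your approach has a real gap at the ``parallel mirror argument'' you invoke to reduce to the case where each kept level has at least half its edges inside $\rhs$-halves. The situation is not symmetric in $\lhs$ and $\rhs$: a level-$\ell^\star$ edge always has its smaller endpoint in $\lhs(I)$ and its larger in $\rhs(I)$, so the forward neighbourhood $N^+(x)$ of any $x\in\lhs(I)$ along level $\ell^\star$ lies in $\rhs(I)$ and never in $\lhs(I)$. If the adversary places all edges at levels $\ell>\ell^\star$ inside $\lhs$-halves, then with your choice $\ell^\star=\min\cL$ there is no rich level below $\ell^\star$ available to connect some $x$ forward into a $\lhs$-half, and the mirror you appeal to does not exist.

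The same rigidity breaks your ``surviving levels'' step. The set $V^\star$ is defined purely from the level-$\ell^\star$ structure, and you give no argument that even a single level $\ell\in\cL^\star$ has a constant fraction of its edges in $\rhs(I^\star)$ with larger endpoint in $V^\star$; the adversary can concentrate all level-$\ell^\star$ edges on a tiny $V^\star$ while routing all higher-level edges through $\rhs(I^\star)\setminus V^\star$, leaving zero surviving levels. The paper sidesteps both issues by \emph{not} fixing the connecting level in advance. It first finds a positive-density set of vertices $y$ that each have high backward degree on many levels of $\cL$ simultaneously, splits each such vertex's good levels $\cL_y$ into a low half and a high half, and only then selects $\ell^\ast$ as a level popular among the low halves. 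This guarantees by construction that the vertices eventually landing in $N^+(x)$ already carry high backward degree on many levels above $\ell^\ast$ (their high halves), which is exactly the compatibility between $V^\star$ and the higher-level edges that your decoupled averaging cannot deliver.
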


\begin{proof}
	Let $G$ be an $(\eps, C)$-rich graph and let $\cL \subseteq [d]$ be the set of $\eps$-rich levels, so $\abs{\cL} \geq C$.
	For a vertex $y \in V(G)$, we write \defn{$d_{\ell, G}^-(y)$} for the backward level $\ell$-degree of $y$, i.e.\ the number of $x \in V(G)$ such that $xy \in E(G)$, $x < y$, and $\delta(x, y) = \ell$. For $\ell \in \cL$ and $y \in V(G)$, define $f(\ell, y) \coloneqq d_{\ell, G}^-(y)/2^{d - \ell}$ and note that this is in $[0, 1]$. Now, since each $\ell \in \cL$ is $\eps$-rich, for every $\ell \in \cL$,
	\begin{equation*}
		\frac{1}{\abs{G}} \sum_{y \in V(G)} f(\ell, y) = \frac{1}{\abs{G}} \sum_{y \in V(G)} \frac{d_{\ell, G}^-(y)}{2^{d - \ell}} = \frac{e_{\ell}(G)}{2^{2d - \ell}} = \frac{1}{2} \frac{e_{\ell}(G)}{\pairs_{\ell, d}} \geq \eps/2.
	\end{equation*}
	Thus, 
	\begin{equation*}
		\frac{1}{\abs{G}} \sum_{y \in V(G)} \biggl(\frac{1}{\abs{\cL}} \sum_{\ell \in \cL} f(\ell, y) \biggr) \geq \eps/2,
	\end{equation*}
	and so, since the parenthesised expression is always in $[0, 1]$, at least an $\eps/6$-proportion of $y \in V(G)$ have $\abs{\cL}^{-1} \sum_{\ell \in \cL} f(\ell, y) \geq \eps/3$. Denote the set of such $y$ by $Y_1$. Since $f(\ell, y) \in [0, 1]$, for every $y \in Y_1$, at least an $\eps/6$-proportion of $\ell \in \cL$ satisfy $f(\ell, y) \geq \eps/6$. For each $y \in Y_1$, let \defn{$\cL_y$} be the set of such $\ell$. 
	Partition each $\cL_y$ into two halves $\cL_y^{\textup{low}}$ and $\cL_y^{\textup{high}}$ where $\cL_y^{\textup{high}}$ consists of the largest $\ceil{\abs{\cL_y}/2}$ elements of $\cL_y$. Note that $\abs{\cL_y} \geq \eps/6 \cdot \abs{\cL} \geq 2$ and so $\abs{\cL_y^\textup{low}} \geq \abs{\cL_y}/3 \geq \eps/18 \cdot \abs{\cL}$.
	
	By averaging, there is some $\ell^\ast \in \cL$ that is in at least an $\eps/18$-proportion of the $\cL_y^{\textup{low}}$ ($y \in Y_1$). Let $Y_2$ be the set of $y \in Y_1$ such that $\ell^\ast \in \cL_y^\textup{low}$. Note that $\abs{Y_2} \geq \eps/18 \cdot \abs{Y_1} \geq \eps^2/108 \cdot \abs{G}$. Since the fundamental intervals at level $\ell^\ast$ partition $V(G)$, there is some $J \in \cP_{\ell^\ast}$ for which $\abs{Y_2 \cap J} \geq \eps^2/108 \cdot \abs{J}$. Let $I \in \cP_{\ell^\ast - 1}$ be the unique fundamental interval at level $\ell^\ast - 1$ which contains $J$. Every $y \in Y_2$ satisfies $d_{\ell^\ast, G}^-(y) \geq \eps/6 \cdot 2^{d - \ell^\ast} > 0$ and so $J = \rhs(I)$. Furthermore, every $y \in Y_2 \cap J$ is adjacent (using level $\ell^\ast$-edges) to at least an $\eps/6$-proportion of $\lhs(I)$. By averaging, there is a vertex $x \in \lhs(I)$ that is adjacent to at least an $\eps/6$-proportion of $Y_2 \cap J$.
	
	Let $Y_3$ be the set of neighbours of $x$ in $Y_2 \cap J$ and so $\abs{Y_3} \geq \eps/6 \cdot \abs{Y_2 \cap J} \geq \eps^3/648 \cdot \abs{J}$. Every $\cL_y^\textup{high}$ ($y \in Y_3$) has size at least $\abs{\cL_y}/2 \geq \eps/12 \cdot \abs{\cL}$. Thus, at least an $\eps/24$-proportion of $\ell \in \cL$ are in at least an $\eps/24$-proportion of $\cL_y^\textup{high}$ ($y \in Y_3$). Let $\cL' \subseteq \cL$ be the set of such $\ell$. 
	
	Now, for every $\ell \in \cL'$, there is some $y \in Y_3$ with $\ell \in \cL_y^\textup{high}$. Since $y \in Y_3 \subseteq Y_2$, we have $\ell^\ast \in \cL_y^\textup{low}$ and so $\ell^\ast < \ell$. Thus $\cL' \subseteq \cL \cap (\ell^\ast, d]$. 
	
	Let $G'$ be the subgraph of $G[\rhs(I)]$ induced by all edges of $G[\rhs(I)]$ whose larger vertex is in $Y_3$ (and so in $N^+(x)$). Note that $G'$ is a graph on $\set{0, 1}^{d - \ell^\ast}$ and the number of a level in $G'$ is $\ell^\ast$ less than the number of a level in $G$. We claim that, for every $\ell \in \cL'$, the level $\ell - \ell^\ast$ is $\eta$-rich in $G'$, where $\eta = \eps^5/46656$, and so $G'$ is $(\eta, \eps/24 \cdot C)$-rich, as desired. Indeed, for $\ell \in \cL'$,
	\begin{align*}
		e_{\ell - \ell^\ast}(G') & = \sum_{y \in Y_3} d^-_{\ell - \ell^\ast, G'}(y) =  \sum_{y \in Y_3} d^-_{\ell, G}(y) \geq \sum_{\substack{y \in Y_3\colon \\ \ell \in \cL_y^\textup{high}}} d^-_{\ell, G}(y) \\
		& \geq \sum_{\substack{y \in Y_3\colon \\ \ell \in \cL_y^\textup{high}}} \eps/6 \cdot 2^{d - \ell} \geq \eps/24 \cdot \abs{Y_3} \cdot \eps/6 \cdot 2^{d - \ell} = \eps^2/144 \cdot \abs{Y_3} \cdot 2^{d - \ell} \\
		& \geq \eps^2/144 \cdot \eps^3/648 \cdot \abs{J} \cdot 2^{d - \ell} = \eta/2 \cdot 2^{d - \ell^\ast} \cdot 2^{d - \ell}.
	\end{align*}
	Now, $1/2 \cdot 2^{d - \ell^\ast} \cdot 2^{d - \ell} = 2^{2d - \ell - \ell^\ast - 1} = 2^{2(d - \ell^\ast) - (\ell - \ell^\ast) - 1} = \pairs_{\ell - \ell^\ast, d - \ell^\ast}$, and so level $\ell - \ell^\ast$ is $\eta$-rich in $G'$, as desired.
\end{proof}

With this in hand, we can prove \cref{thm:vanishing}.

\begin{proof}[Proof of \cref{thm:vanishing}.]
	By \cref{thm:rho-rich-equivalence} and \cref{cor:density-Hk}, it is enough to show that for any $k$ and $\eps>0$, there exists $C$ such that every $(\eps,C)$-rich graph contains $H_k$. We prove this by induction on $k$, noting that the base case $k=1$ is trivial. Therefore, assume that $k>1$ and that the hypothesis is true for $k-1$.
	
	Let $\eps>0$, and apply \cref{lemma:vanishing-aux} with $\eps_{\ref{lemma:vanishing-aux}}=\eps/2$ to obtain $\eta$. Now, let $C_0$ be sufficiently large such that every $(\eta,C_0)$-rich graph contains $H_{k - 1}$, let $C \coloneq 24C_0/\eps+2/\eps$, and let $G$ be a $(\eps, C)$-rich graph. Denote the $\eps$-rich levels of $G$ by $\cL$. 
	
	For each $x \in V(G)$, let $\ell_x$ be the maximal $\ell \in [d]$ for which $x$ is the smaller vertex of a level $\ell$-edge (if $x$ is isolated we let $\ell_x = 0$). We construct a subgraph $G_1\subset G$ by removing for each vertex $x \in V(G)$, every (forward) edge at level $\ell_x$ to which $x$ is incident. Writing $p_\ell$ for the proportion of vertices $x\in V(G)$ with $\ell_x = \ell$, we have $\sum_{\ell \in \cL} p_\ell \leq 1$. Furthermore, the total number of edges that we remove from $G$ at level $\ell$ is at most $p_\ell\pairs_{\ell,d}$. Thus, the number of levels $\ell$ from which we remove more than $\eps \pairs_{\ell,d}/2$ edges is at most $2/\eps$ and so $G_1$ is still $(\eps/2, 24C_0/\eps)$-rich.
	
	By our choice of $\eta$ and \cref{lemma:vanishing-aux}, there exists a fundamental interval $I$, a vertex $x\in \lhs(I)$ and an $(\eta, C_0)$-rich subgraph $G'\subset G_1[\rhs(I)]$ such that each vertex of $G'$ with a backward neighbour in $G'$ is in $N^+(x)$.
	
	By the induction hypothesis, there is an embedding $\phi$ of $H_{k-1}$ into $G'$, and moreover, the vertices $(1,1), \dots, (k-1,1)$ of $H_{k-1}$ are all mapped into $N^+(x)$ by $\phi$. Furthermore, since $x$ is not isolated in $G_1$, it was not isolated in $G$ either. Thus, if $I$ is a fundamental interval at level $a\in [d]$, then the edges between $x$ and $\rhs(I)$ are at level $a+1$, and $\ell_x>a+1$. Hence,  $x$ has a neighbour $y\in \lhs(I)$, and in particular $y<\phi(v)$ for all $v\in H_{k-1}$.
	
	Therefore, we can extend $\phi$ to an embedding of $H_k$ into $G$ by mapping $(1,0)\in H_k$ to $x$, $(1,1)\in H_k$ to $y$ and $(i,j)\in H_k$ for $i\in \set{2,\dots, k}$ and $j\in \set{0,1}$ to $\phi((i-1,j))$. This completes the proof.
\end{proof}

\section{In between: open problems}\label{sec:open}

Every ordered graph with a monotone path of length two has relative Tur\'{a}n density at least $1/4$. Together with \cref{thm:vanishing} this implies that no ordered graph has a relative Tur\'{a}n density in $(0, 1/4)$. Understanding the possible relative Tur\'{a}n densities would be very interesting. The corresponding problem for Tur\'{a}n densities in hypergraphs is a very well-known and difficult problem \cite{Erds1964OnEP,Frankl1984HypergraphsDN,BABER_TALBOT_2011,pikhurko_turan,conlon2025hypergraphs,conlon2025hypergraphsaccumulateinfinitely}. 

\begin{problem}
	Understand the set $\set{\rord(F) \colon F \text{ is an ordered graph}}$.
\end{problem}

Currently, the known members of this set are $1/2 \cdot (1 - 1/\ell)$ and $1 - 1/\ell$ ($\ell \in \bN$) corresponding to $F$ being a monotone path~\cite{relative-ordered-turan-origin} and a clique, respectively. 

\Cref{thm:universal} shows that the family $(R(m,d))_{m,d\in \N}$ is a universally optimal family of host graphs for the relative ordered Tur\'{a}n problem. Since the construction of $R(m,d)$ involves randomness, it is clear that the optimal family is not unique in a strict sense. 

It is however possible, that any universally optimal family must satisfy its own version of \cref{lemma:uniform-tiling}. To make this precise, say that a sequence of graphs $(G_n)_{n\in \N}$ is \defn{universal} if all ordered graphs $F$ satisfy $\rord(F)=\lim_{n\rightarrow\infty} \rord(F,G_n)$. For example, it readily follows from \cref{thm:universal} that if $(m_d)_{d\in \N}$ grows sufficiently fast, then $(R(m_d,d))_{d\in \N}$ is universal. Recall that one key observation\footnote{In \cref{lemma:uniform-tiling}, we avoided probabilistic technicalities by using the more concise language of rich levels, but a statement about $R(m,d)$ as expressed here can be shown using an essentially similar proof.} behind the proof of \cref{thm:universal} was that every ordered graph can be embedded into $R(m,d)$ probabilistically in such a way that the edges of $R(m,d)$ are approximately equally likely to be hit. It is plausible that other universal sequences must also have this property. 

\begin{problem}
	Let $(G_n)_{n\in \N}$ be universal. Is it true that for all ordered graphs $H$, there exists a sequence of random embeddings $\Phi_n\colon H\to G_n$ such that 
	\begin{equation*}
		\lim_{n\to \infty} \biggl(\max_{e\in E(G_n)} \bP(e\in \Phi_n(H)) - \frac{e(H)}{e(G_n)}\biggr) = 0\, ?
	\end{equation*}
\end{problem}

\Cref{thm:rho-rich-equivalence} reduces the problem of proving $\rord(F,R(m,d))\leq \alpha$ further to the more transparent problem of finding $F$ in $(\alpha+\eps, C)$-rich graphs. However, our proof of this result uses the regularity lemma in such a way that makes it necessary to choose the parameter $m$ in our host graph $R(m,d)$ as a tower of exponential height in $1/\eps$.

We believe that it should be possible to bound the required $m$ by a tower of the form $\exp^{\circ k_F}(1/\eps)$ where $k_F$ depends only on $F$. Beyond that, other host graphs may be more efficient for some or all $F$ and $\eps$.

\begin{problem}
	Given an ordered graph $F$ and $\eps>0$, what is the minimum $n$ such that there exists a graph $G$ on $n$ vertices for which $\rho(F,G) < \rho(F) + \eps$.
\end{problem}

In an upcoming paper, we will build on our work here, introducing new machinery for proving upper bounds on the relative Tur\'{a}n density of ordered graphs and finding explicit bounds for more families of graphs.

\paragraph{Acknowledgements.} This project was initiated during the inaugural staycation workshop at the London School of Economics. We are grateful to LSE for accommodating us.

{
	\fontsize{11pt}{12pt}
	\selectfont
	
	\hypersetup{linkcolor={red!70!black}}
	\setlength{\parskip}{2pt plus 0.3ex minus 0.3ex}
	
	\newcommand{\etalchar}[1]{$^{#1}$}

}

\appendix

\section{Auxiliary lemmas}\label{appendix:aux}

\begin{proof}[Proof of \cref{lemma:binomial-fraction}]
	Let $\eta=\eps/2k$, and note that $m = m(n) = \floor{(\alpha-\eta) n}$ is much larger than $k$ if $n$ is large.
	Then, as $n$ grows, we have
	\begin{align*}
		\frac{\binom{m}{k}}{\binom{n}{k}}&=\frac{(1-\cO_k(1/m))m^k/k!}{(1-\cO_k(1/n))n^k/k!}\\
		&= (1-\cO_{k}(1/n))\left(\frac{m}{n}\right)^k\\
		&\ge (1-\cO_{k}(1/n)) \left(\alpha-\eta -\frac{1}{n} \right)^k\\
		&\ge \alpha^k-k\eta-\cO_{k}(1/n),
	\end{align*}
	which is at least $\alpha^k-\eps$ when $n$ is sufficiently large.
\end{proof}

\begin{proof}[Proof of \cref{lemma:locally-balanced}]
	Let $\eps>0$ be fixed and let $n$ be sufficiently large in terms of $\eps$. Generate a uniformly random element $x = x_1\dots x_n\in \set{0, 1}^n$ by independently sampling $x_i \in \text{Ber}(1/2)$ for each $i\in [n]$. For each interval $J\subset [n]$, consider the random variable $Z_J = \sum_{i\in J}x_i$ and note that $\bE [Z_J] = \sum_{i\in J}\bP(x_i = 1) = |J|/2$. Let $\mathcal{J}$ denote the set of integer intervals in $n$ of size at least $\ln^2 n$. If $J \in \mathcal{J}$, then by a standard application of the Chernoff bound, we have
	\begin{equation*}
		\bP \Biggl(\abs[\bigg]{Z_J - \frac{\abs{J}}{2}} \geq \eps \abs{J}\Biggr) =\bP(\abs{Z_J - \bE[Z_J]} \geq 2\eps \bE [Z_J]) \leq 2\exp\biggl(-\frac{2\eps^2\ln^2 n}{3}\biggr).
	\end{equation*}
	Note that for each $i\in [n]$ there are exactly $n-i+1$ integer intervals in $[n]$ that start at $i$, and so in particular $\abs{\mathcal{J}}\leq n(n+1)/2\leq n^2$. Applying a union bound, we have $\abs{Z_J - \abs{J}/2} < \eps \abs{J}$ for all $J \in \mathcal{J}$ with probability at least $1-n^2\exp(-2\eps^2\ln^2 n/3) \geq 1-\eps$ for sufficiently large $n$. Therefore there exists at least a $(1-\eps)$-proportion of elements of $\set{0, 1}^n$ for which $\abs{\sum_{i\in J} x_i - \abs{J}/2}<\eps \abs{J}$ holds for all $J \in \mathcal{J}$. This proves the lemma.
\end{proof}

\begin{proof}[Proof of \cref{lemma:binomial-average}]
	Throughout the proof $\eta$ will be sufficiently small as a function of $\edged$, $\eps$, and $k$, and $n$ will be sufficiently large as a function of all other parameters. Let $f$, $x$ and $y$ be as in the claim,
	let $m = \floor{\eta n}$, and let $t_0 = \ceil{\sqrt{\eta}n}$. We may assume without loss of generality that $n - 2t_0 = Qm$ for some integer $Q$ so that $(t_0, n - t_0] = (t_0, t_0 + m] \cup \dots \cup (t_0 + (h - 1)m, t_0 + Qm]$. Letting
	\begin{equation*}
		g(t) = \binom{t-1}{x}\binom{n-t}{y},
	\end{equation*}
	we have that for all $t\in (t_0,n-t_0]$
	\begin{equation*}
		\frac{g(t+1)}{g(t)} = \frac{t}{t-x}\frac{n-t-y}{n-t}\in \left(1-\frac{y}{t_0},1+\frac{x}{t_0-x}\right).
	\end{equation*}
	Thus, we have for all $t_0<s<t\leq n-t_0$ with $t-s\leq m$ that
	\begin{equation*}
		\frac{g(t)}{g(s)}\geq \left(1-\frac{y}{t_0}\right)^m \geq 1-\frac{ym}{t_0} \geq 1-\sqrt{\eta}y,
	\end{equation*}
	which is more than $1-\eps/2$ when $\eta$ is sufficiently small. Furthermore, for sufficiently large $n$, we have that $x<t_0/2$ so that for all $t_0<s<t\leq n-t_0$ with $t-s\leq m$
	\begin{equation*}
		\frac{g(s)}{g(t)}\geq \left(1-\frac{2x}{t_0}\right)^m\geq 1 - 2mx/t_0 \geq 1-2x\sqrt{\eta},
	\end{equation*}
	which is more than $1-\eps/2$ when $\eta$ is sufficiently small. Therefore, we have
	\begin{align*}
		\sum_{t=t_0+1}^{n-t_0} f(t) \binom{t-1}{x}\binom{n-t}{y}&\geq \sum_{q=0}^{Q-1} \left(\min_{s\in (t_0+qm,t_0+(q+1)m]}\binom{s-1}{x}\binom{n-s}{y}\right) \sum_{t=t_0+qm+1}^{(q+1)m} f(t) \\
		&\overset{\eqref{eq:binomial-average-premise}}{\geq} \sum_{q=0}^{Q-1} (\alpha-\eta)m \left(\min_{s\in (t_0+qm,t_0+(q+1)m]}\binom{s-1}{x}\binom{n-s}{y}\right)\\
		&\geq\sum_{q=0}^{Q-1} (1-\eps/2)(\alpha-\eta)\sum_{t=t_0+qm+1}^{t_0+(q+1)m} \binom{t-1}{x}\binom{n-t}{y},
	\end{align*}
	which is at least
	\begin{equation}\label{eq:binomial-average-pre-final}
		(1-3\eps/4)\alpha \sum_{t=t_0+1}^{n-t_0}\binom{t-1}{x}\binom{n-t}{y}.
	\end{equation}
	The sum in \eqref{eq:binomial-average-pre-final} counts the number of sets $\set{a_1 < \dots < a_{x + y + 1}} \subseteq [n]$ of size $x+y+1$ such that $a_{x+1}$ is in the interval $(t_0,n-t_0]$. Certainly, the number of such sets is at least the number of sets of size $x+y+1$ that lie entirely in $(t_0,n-t_0]$, which is $\binom{n-2t_0}{x+y+1}$. But when $\eta$ is sufficiently small and $n$ sufficiently large, this is at least $(1-\eps/4)\binom{n}{x+y+1}$ by \cref{lemma:binomial-fraction}, so that \eqref{eq:binomial-average-pre-final} is at least
	\begin{equation*}
		(1-\eps)\alpha \binom{n}{x + y + 1},
	\end{equation*}
	completing the proof.
\end{proof}

\end{document}